\documentclass{amsart}

\usepackage{amsmath,amsthm,amsfonts,amssymb,amscd}
\usepackage[all]{xy}
\usepackage[english]{babel}

\usepackage[dvips]{graphicx}
\usepackage[T1]{fontenc}
\usepackage[latin1]{inputenc}
\usepackage{ae}

{\theoremstyle{plain}
\newtheorem{theorem}{Theorem}[section]
\newtheorem{proposition}{Proposition}[section]
\newtheorem{lemma}{Lemma}[section]
\newtheorem{example}{Example}[section]
\newtheorem{definition}{Definition}[section]
\newtheorem{remark}{Remark}[section]
\newtheorem{corollary}{Corollary}[section]
}
\newcommand{\nc}{\newcommand}
\nc {\hh}{\check{h}}
\nc {\DD}{\mathcal{D}}
\nc {\CC}{\mathbb{C}}
\nc {\Pp}{\mathbb{P}}
\nc {\Ss}{\mathcal{S}}
\nc {\PP}{\mathbb{P}^{2}}
\nc {\Pd}{ \check{\mathbb{P}}^{2}}
\nc {\WW}{\mathcal{W}}
\nc {\Sym}{\mathrm{Sym}}
\nc {\OO}{\mathcal{O}}
\nc {\UU}{\mathcal{U}}
\nc {\EE}{\mathcal{E}}
\nc {\MM}{\mathcal{M}}
\nc {\KK}{\mathcal{K}}
\nc {\PW}{\mathcal{P}}
\nc {\NW}{\mathcal{N}_{\WW}}
\nc {\FF}{\mathcal{F}}
\nc {\GG}{\mathcal{G}}
\nc {\ZZ}{\mathcal{Z}}
\nc {\LL}{\mathcal{L}}
\nc {\NN}{\mathcal{N}}
\nc {\VV}{\mathcal{V}}
\nc {\Ww}{\mathbb{W}}
\nc {\QQ}{\mathbb{Q}}
\nc {\II}{\mathcal{I}}

\setlength{\unitlength}{1cm}

\date{}

\begin{document}

\title[Second order differential equations without algebraic solutions]{On the density of Second Order Differential Equations without algebraic solutions on $\PP$}

\author{M. Falla Luza}

\thanks{2000 Mathematics Subject Classification: 37F75}
\thanks{Key words: Holomorphic Foliation, Second Order Differential Equation}

\begin{abstract}
We prove that a generic second order differential equation in the projective plane has no algebraic solutions when the bidegree is big enough. We also proof an analogous result for webs on $\PP$.
\end{abstract}
\maketitle 


\section{Introduction}

In the late 1970s, J. P. Jouanolou reworked and extended the work of Darboux
about differential equations over the complex projective plane \cite{Dar} in the framework of modern algebraic geometry. One of the most important results of Jouanolou's monograph \cite{Jou} states that a very generic holomorphic foliation of the projective plane, of degree at least 2, does not have any invariant algebraic curves.\\
 
This result was extended in various ways, see \cite{CouPer}, \cite{Alc}, \cite{Per} and \cite{Soa}. For example, in \cite{CouPer} the authors prove that over a smooth complex projective variety of dimension greater or equal than 2, a very generic holomorphic foliation of dimension one with sufficiently ample cotangent bundle has no proper invariant algebraic subvarieties of nonzero dimension. On the other hand, in \cite{Per} the author gives a different proof of Jouanolou's theorem following the ideas of \cite{CouPer} and restricting to $\PP$.\\

Since we can think in holomorphic foliations as first order differential equations of degree one, we are tempted to believe that the same assertion holds true for first order differential equations of any degree (webs) or for higher order differential equations.\\

Our main theorem shows that the assertion is true for second order differential equations on $\PP$, that is, we prove that a generic second order differential equation has no algebraic solutions for certain conditions over the bidegree (which we will define on Section 3). Our result is sharp in the sense that the second order differential equations not satisfying the above mentioned conditions always have invariant algebraic solutions.\\
 
We also proof that an analogous to Jouanolou's theorem holds true in the case of $k$-webs of degree d (first order differential equations of degree $k$) on $\PP$ when $d\geq 2$; and for webs with sufficiently ample normal bundle on projective surfaces.\\

The main idea is to show that the set of second order differential equations \-ha\-ving algebraic solution is a union of countable many closed sets in the set of the second order differential equations with fixed bidegree. Then we use the result for webs to show that all of this closed set are proper subsets.\\

The paper is organized as follows: In Section 2 we define our main object, second order differential equations on a surface and we describe the space we work with. We also give some algebraic formulas and use them to shed some light on the geometry of this equations. In Section 3 we introduce the space of second order differential equations and we describe this space for some cases. We finish this section stating our main result, Theorem \ref{jouanolou-para-ecuaciones-diferenciales}. We devote Section 4 to show a similar result for webs and give a known description of webs of low degree. In Section 5 we prove the main theorem. Finally in Section 6 we give a similar result for second order differential equations on general surfaces.

\section{Second Order Differential Equations}
\subsection{The contact distribution}
Let $S$ be a complex (smooth) surface and consider $M=\Pp(TS)$ the total space of the projective bundle $\pi: \Pp(TS)\rightarrow S$. The three dimensional variety $M$ is usually called the \textit{the contact variety}. For any curve $C \subseteq M$ one can define its lifting to $M$ as the curve: $\widetilde{C}=\overline{\{(x,[T_{x}C]): x\in C_{smooth}\}}\subseteq M$. \\

For each point $x=(z,[v])\in M$,i.e. $z\in S$ and $v\in T_{z}S$, one has the plane $\DD_{x}:=(d\pi(x))^{-1}(\CC v)$. We obtain in this way a two dimensional distribution $\DD$ in $M$, the so called \textit{contact distribution}:
\begin{equation}\label{secuencia-de-distribucion-de-contacto}
\xymatrix{
0 \ar[r] &\DD \ar[r]& TM \ar[r] & N_{\DD} \ar[r] & 0
}
\end{equation}

In local coordinates $(x,y,p)\in M$ (here (x,y) are coordinates in $S$ and $p$ represents the tangent direction $\frac{dy}{dx}$), $\DD$ is the distribution given by the 1-form $\alpha = dy-pdx$, called \textit{the contact form},
\[
\alpha \in H^{0}(M, \Omega^{1}_{M}\otimes N_{\DD}).
\]
The following lemma is clear by the definition and local form of $\DD$.

\begin{lemma}
The contact distribution satisfies the following properties:
\begin{enumerate}
    \item $\DD$ is not integrable.
    \item For any curve $C\subseteq S$ we have that $\widetilde{C}$ is tangent to $\DD$. Moreover, beside the fibers of $\pi$, the only curves on $M$ tangent to $\DD$ are the ones of this form.  
\end{enumerate}
\end{lemma}
\subsection{Second Order Differential Equations}
\begin{definition}
A second order differential equation in $S$ is a one dimensional foliation $\FF$ in $M$, tangent to $\DD$; in other words, a foliation defined by an element
$$
X_{\FF}\in H^{0}(M, \DD \otimes T^{*}\FF)
$$
for a suitable line bundle $T^{*}\FF$ (called the cotangent bundle of $\FF$). The solutions of this equations are the projections by $\pi$ of the leaves of $\FF_{sat}$, where $\FF_{sat}$ is the saturated foliation associated to $\FF$.
\end{definition}

\begin{remark}
For a clasical second order differential equation on $(\CC^{2},0)$ of the form

\begin{equation}\label{Second-order-dif-eq}
y''=\frac{A(x,y,y')}{B(x,y,y')}=\frac{a_{0}(x,y)+a_{1}(x,y)y'+\ldots +a_{l}(x,y)(y')^{l}}{b_{0}(x,y)+b_{1}(x,y)y'+\ldots +b_{k}(x,y)(y')^{k}}
\end{equation}
where the coefficients $a_{0}(x,y), \ldots , a_{l}(x,y), b_{0}(x,y), \ldots , b_{k}(x,y)$ are polynomials, one can associate the vector field 

\begin{equation*}
X=B(x,y,p)\frac{\partial}{\partial x}+pB(x,y,p)\frac{\partial}{\partial y}+A(x,y,p)\frac{\partial}{\partial p}
\end{equation*}
on $\Pp (T(\CC^{2},0))$. It is easy to verify that the integral curves of $X$ are projected by $\pi$ to the solutions of (\ref{Second-order-dif-eq}).\\

Observe also that $X$ is tangent to $\DD$; moreover every vector field on $\Pp (T(\CC^{2},0))$ tangent to $\DD$ has this form. Of course the foliation $\FF$ defined by $X$ extends to a foliation on $M=\Pp (T\PP)$ tangent to $\DD$. 
\end{remark}

We shall work in the case when $S=\PP$. Observe that in this case the variety $S$ can be identify with the incidence variety of points and lines in $\PP$. We denote by $\check{\pi}$ the restriction to $M$ of the projection over $\Pd$:
\[
\xymatrix{
&M=\Pp(T\PP) \ar[dl]_{\pi} \ar[dr]^{\check{\pi}} &\subseteq \PP \times \Pd \\
\PP& &\Pd
}
\]

\subsection{The cohomology of M}

Let us denote by $h=c_{1}(\OO_{\PP}(1))$ and $\hh=c_{1}(\OO_{\Pd}(1))$ the hyperplane classes on $\PP$ and $\Pd$ respectively. We still denote by $h$ and $\hh$ the respective pullbacks to $M$ by $\pi$ and $\check{\pi}$. Note that in coordinates $(x,y,p)$, $\hh$ is the class of the divisor $\{p=0\}$. Let us also denote by $\OO_{M}(a,b)$ the class of the line bundle $\OO_{M}(ah+b\hh)$. 

\begin{lemma}\label{normal-bundle-of-the-contac-distribution}
The class of the normal bundle of $\DD$ is given by $$N_{\DD}=\OO_{M}(1,1).$$
\end{lemma}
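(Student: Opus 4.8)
The plan is to determine $N_{\DD}$ up to the relation $2N_{\DD}=-K_M$ using the contact nature of $\DD$, and then pin down $K_M$ by adjunction. Since $\DD$ has rank $2$ and $TM$ has rank $3$, the sequence \eqref{secuencia-de-distribucion-de-contacto} shows that $N_{\DD}$ is a line bundle, so it suffices to compute a single class in $\mathrm{Pic}(M)$. I would exploit the fact that $M$ is a smooth $\Pp^1$-bundle over $\PP$, hence a smooth rational threefold with $\mathrm{Pic}(M)=\CC h\oplus\CC\hh$ torsion-free; in particular division by $2$ in $\mathrm{Pic}(M)$ is unambiguous, so the relation $2N_{\DD}=-K_M$ will determine $N_{\DD}$ completely.

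First I would extract the relation $2N_{\DD}=-K_M$ from the contact form. The Levi form $\wedge^2\DD\to N_{\DD}$, sending $(u,v)\mapsto[u,v]\bmod\DD$, is $\OO_M$-bilinear and measures the failure of integrability. In the local coordinates $(x,y,p)$ one has $\alpha=dy-p\,dx$ and $d\alpha=dx\wedge dp$, so $\alpha\wedge d\alpha=dy\wedge dx\wedge dp$ is nowhere zero; this is exactly the nondegeneracy of the Levi form, showing that $\DD$ is a genuine contact structure and that the induced map $\det\DD\to N_{\DD}$ is an isomorphism of line bundles. Taking determinants in \eqref{secuencia-de-distribucion-de-contacto} then gives $\det TM\cong\det\DD\otimes N_{\DD}\cong N_{\DD}^{\otimes 2}$, that is $N_{\DD}^{\otimes 2}\cong -K_M$ in $\mathrm{Pic}(M)$.

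It remains to compute $K_M$, and here I would use the identification of $M$ with the incidence variety inside $\PP\times\Pd$ already noted in the text. The incidence locus $\{(z,\ell):z\in\ell\}$ is cut out by the bilinear pairing $\sum_i x_i\xi_i=0$, so $M$ is a smooth divisor of bidegree $(1,1)$, of class $h+\hh$. Since $K_{\PP\times\Pd}=-3h-3\hh$, adjunction yields $K_M=(K_{\PP\times\Pd}+[M])|_M=(-3h-3\hh+h+\hh)|_M=-2h-2\hh=-2(h+\hh)$, where I use that the pullbacks of the hyperplane classes agree with the classes $h,\hh$ on $M$. Combining this with the previous paragraph, $N_{\DD}^{\otimes 2}\cong -K_M=2(h+\hh)$, and dividing by $2$ in the torsion-free group $\mathrm{Pic}(M)$ gives $N_{\DD}=h+\hh=\OO_M(1,1)$, as claimed.

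The step that most needs care is the passage from mere non-integrability (item (1) of the preceding lemma) to the nondegeneracy of the Levi form, i.e. that $\det\DD\to N_{\DD}$ is an isomorphism rather than just nonzero somewhere; the explicit normal form $\alpha=dy-p\,dx$ handles this, since the same expression holds in every contact chart of $M$. One could alternatively avoid the contact argument entirely and compute $N_{\DD}$ directly from the relative Euler sequence of $\pi\colon\Pp(T\PP)\to\PP$ together with the class of $\alpha\in H^{0}(M,\Omega^1_M\otimes N_{\DD})$, but the adjunction route is shorter and makes the symmetry in $h$ and $\hh$ transparent.
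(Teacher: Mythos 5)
Your proof is correct, but it follows a genuinely different route from the paper's. The paper's proof is a direct cocycle computation: one writes the contact form $\alpha$ in each coordinate chart of $M$, reads off the transition functions, and identifies $N_{\DD}=\OO_{M}((\alpha)_{\infty}-(\alpha)_{0})$. You instead use the contact (non-degenerate) nature of $\DD$ to get $N_{\DD}^{\otimes 2}=-K_{M}$ --- equivalently, $\alpha\wedge d\alpha$ is a nowhere-vanishing section of $K_{M}\otimes N_{\DD}^{\otimes 2}$ --- then compute $K_{M}=\OO_{M}(-2,-2)$ by adjunction on the $(1,1)$-incidence divisor in $\PP\times\Pd$, and divide by $2$ in the torsion-free Picard group. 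This trades the chart-by-chart bookkeeping for a pointwise open condition (which, by homogeneity of $M$ under $\mathrm{Aut}(\PP)$, need only be checked at one point) plus standard adjunction; as a bonus it establishes $K_{M}=\OO_{M}(-2,-2)$ (item (1) of Lemma \ref{fibrado-canonico-de-M-fibrado-normal-y-determinante-de-D}) independently of the diagram (\ref{diagrama-del-fibrado-normal-de-la-distribucion-de-contacto}), whereas the paper derives it only afterwards. There is no circularity, since you use neither Lemma \ref{tautological-bundle} nor the diagram. Two small points: $\mathrm{Pic}(M)$ is $\mathbb{Z}h\oplus\mathbb{Z}\hh$, not $\CC h\oplus\CC\hh$ (only torsion-freeness is needed, and you invoke exactly that, so the slip is harmless); and the passage from non-degeneracy of the Levi form to $\det\DD\cong N_{\DD}$ deserves the one line you give it, namely that for a rank-two distribution a non-degenerate $N_{\DD}$-valued $2$-form on $\DD$ is precisely an isomorphism $\det\DD\to N_{\DD}$.
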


\begin{proof}
It is enough to write the contact form in all the coordinate charts of $M$ and observe that $N_{\DD}=\OO_{M}((\alpha)_{\infty}-(\alpha)_{0})$.
\end{proof}

On the other hand, looking to the sequences

\begin{equation}\label{diagrama-del-fibrado-normal-de-la-distribucion-de-contacto}
\xymatrix{&& 0 \ar[d] & 0 \ar[d] \\
0 \ar[r]&T_{M|S} \ar[r] \ar@{=}[d]&\DD \ar[d] \ar[r]&\OO_{\Pp(TS)}(-1) \ar[d] \ar[r]&0\\
0 \ar[r] & T_{M|S} \ar[r] &  TM \ar[d] \ar[r]^{d \pi} & \pi^{*}(TS) \ar[r] \ar[d] \ar[r] &0 \\
&&N_{\DD} \ar[d] &  T_{M|S}\otimes \OO_{\Pp(TS)}(-1) \ar[d] \\
&& 0 &0 }
\end{equation}
where the rigth vertical sequence is the Euler's sequence for projective bundles; we conclude that $N_{\DD}=\OO_{M}(3,0)\otimes \OO_{\Pp(TS)}(1)$. Therefore one obtains:

\begin{lemma}\label{tautological-bundle}
The class of the tautological bundle is given by $$c_{1}(\OO_{\Pp(TS)}(-1))=2h-\hh.$$
\end{lemma}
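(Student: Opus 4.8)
The plan is not to compute the tautological class from scratch, but to read it off by comparing the two independent descriptions of the normal bundle $N_{\DD}$ that are already available. Since $c_{1}(\OO_{\Pp(TS)}(-1))=-c_{1}(\OO_{\Pp(TS)}(1))$, it suffices to determine the relative hyperplane class $\xi:=c_{1}(\OO_{\Pp(TS)}(1))$ as an element of $H^{2}(M,\mathbb{Z})$, which is generated by $h$ and $\hh$ (the variety $M$ has Picard rank two).

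First I would invoke the two computations of $N_{\DD}$. On one hand, Lemma \ref{normal-bundle-of-the-contac-distribution} gives $N_{\DD}=\OO_{M}(1,1)$, so $c_{1}(N_{\DD})=h+\hh$. On the other hand, the diagram (\ref{diagrama-del-fibrado-normal-de-la-distribucion-de-contacto}) --- whose middle column is the contact sequence (\ref{secuencia-de-distribucion-de-contacto}) and whose right column is the relative Euler sequence --- yields $N_{\DD}=\OO_{M}(3,0)\otimes\OO_{\Pp(TS)}(1)$, so $c_{1}(N_{\DD})=3h+\xi$. Equating the two expressions in $H^{2}(M,\mathbb{Z})$ gives $h+\hh=3h+\xi$, whence $\xi=\hh-2h$ and therefore $c_{1}(\OO_{\Pp(TS)}(-1))=-\xi=2h-\hh$, as claimed. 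This is the whole argument; it is a single subtraction once the two presentations of $N_{\DD}$ are in hand.

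I do not expect a genuine obstacle, since both ingredients have been established above; the only point to keep straight is that the two descriptions really refer to the same line bundle $N_{\DD}$ and that the identity is taken in the Picard group (equivalently in $H^{2}$). If a sanity check is wanted, the value $\xi=\hh-2h$ is consistent with the right column of (\ref{diagrama-del-fibrado-normal-de-la-distribucion-de-contacto}) and with the Grothendieck relation on $M=\Pp(T\PP)$, which in the subbundle convention used there reads $\xi^{2}+\pi^{*}c_{1}(T\PP)\,\xi+\pi^{*}c_{2}(T\PP)=0$, i.e. $\xi^{2}+3h\xi+3h^{2}=0$; substituting $\xi=\hh-2h$ reduces this to the relation $h^{2}-h\hh+\hh^{2}=0$ that holds on the point-line incidence variety $M\subseteq\PP\times\Pd$.
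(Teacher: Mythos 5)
Your argument is correct and is exactly the paper's: the lemma is stated there as an immediate consequence ("Therefore one obtains") of equating the two descriptions $N_{\DD}=\OO_{M}(1,1)$ from Lemma \ref{normal-bundle-of-the-contac-distribution} and $N_{\DD}=\OO_{M}(3,0)\otimes\OO_{\Pp(TS)}(1)$ from the diagram (\ref{diagrama-del-fibrado-normal-de-la-distribucion-de-contacto}), which is precisely your single subtraction $h+\hh=3h+\xi$. Your Grothendieck-relation sanity check is a small bonus consistent with the relation $h^{2}-h\hh+\hh^{2}=0$ used later in the paper.
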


Note that for any complex vector bundle $\pi: E \rightarrow S$, the cohomology ring $H^{*}(\Pp(E))$ is, via the pullback map $\pi^{*}:H^{*}(S)\rightarrow H^{*}(\Pp(E))$ an algebra over the ring $H^{*}(S)$, wich is generated by $\xi = c_{1}(\mathcal{O}_{\Pp(E)}(-1))$ with the relation $\xi^{r} - c_{1}(E)\xi^{r-1} + c_{2}(E)\xi^{r-2} + \ldots + (-1)^{r-1}c_{r-1}(E)\xi + (-1)^{r}c_{r}(E)=0$  (see \cite{GH}, pg. 606).

Applying this to the case $E=T\PP$ and using lemma (\ref{tautological-bundle}) we get the desired description of $H^{*}(M)$:

\begin{equation*} \label{cohomologia-de-M}
H^{*}(M)=\frac{\mathbb{Z}[h,\hh]}{\langle h^{3}, h^{2} - h \hh +\hh^{2}\rangle} .
\end{equation*}

One also has:

\begin{lemma}\label{fibrado-canonico-de-M-fibrado-normal-y-determinante-de-D}
The following equalities hold true:
\begin{enumerate}
\item $K_{M}=\OO_{M}(-2,-2)$, where $K_{M}$ is the canonical bundle of $M$.
\item $N_{\DD}=\OO_{M}(1,1)=det(\DD)$.
\item $\hh ^{3}=0$, $h^{2}\hh=h \hh ^{2}=1$.
\end{enumerate}
\end{lemma}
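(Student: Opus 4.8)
The three assertions are all Chern-class identities in the ring $H^{*}(M)=\mathbb{Z}[h,\hh]/\langle h^{3},\,h^{2}-h\hh+\hh^{2}\rangle$ described above, so the plan is to read each one off from the exact sequences already assembled, using the relation $\hh^{2}=h\hh-h^{2}$ to reduce everything to products of the generators $h$ and $\hh$. I would carry out the three items in the order (3), (1), (2), since (3) fixes the normalization of the intersection product on the threefold $M$ that the other two implicitly use.

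For (3), I would first observe that $h=\pi^{*}c_{1}(\OO_{\PP}(1))$ is pulled back from $\PP$, so $h^{3}=\pi^{*}(h^{3})=0$ and $h^{2}$ is the class of a fiber $\pi^{-1}(\mathrm{pt})$. To get the normalization $h^{2}\hh=1$ I would use the geometry of $\check{\pi}$: restricted to a fiber $\pi^{-1}(x)=\Pp(T_{x}\PP)$, the map $\check{\pi}$ is an isomorphism onto the line in $\Pd$ dual to $x$ (the pencil of lines through $x$), so $\hh$ restricts to the degree-one class $\OO_{\Pd}(1)$ on that line and $h^{2}\hh=1$. The other two numbers then follow formally from the ring relation: $h\hh^{2}=h(h\hh-h^{2})=h^{2}\hh-h^{3}=1$ and $\hh^{3}=\hh(h\hh-h^{2})=h\hh^{2}-h^{2}\hh=0$.

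For (1) and (2) I would compute $c_{1}(TM)$ from the two vertical columns of the diagram \eqref{diagrama-del-fibrado-normal-de-la-distribucion-de-contacto}. The relative Euler sequence (right column) gives $c_{1}(\pi^{*}T\PP)=c_{1}(\OO_{\Pp(T\PP)}(-1))+c_{1}\bigl(T_{M|S}\otimes\OO_{\Pp(T\PP)}(-1)\bigr)$; inserting $c_{1}(\pi^{*}T\PP)=3h$ together with Lemma \ref{tautological-bundle}, namely $c_{1}(\OO_{\Pp(T\PP)}(-1))=2h-\hh$, yields $c_{1}(T_{M|S})=2\hh-h$. Adding $c_{1}(\pi^{*}T\PP)=3h$ along the $d\pi$-sequence gives $c_{1}(TM)=2h+2\hh$, hence $K_{M}=\OO_{M}(-2,-2)$, which is (1). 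For (2), the contact sequence \eqref{secuencia-de-distribucion-de-contacto} gives $\det(\DD)=c_{1}(\DD)=c_{1}(TM)-c_{1}(N_{\DD})=(2h+2\hh)-(h+\hh)=h+\hh$, and this is precisely $N_{\DD}=\OO_{M}(1,1)$ by Lemma \ref{normal-bundle-of-the-contac-distribution}, establishing (2).

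Once the bookkeeping is in place the manipulations are routine; the one genuinely delicate point is the normalization $h^{2}\hh=1$, i.e.\ pinning down the orientation of the generator of $H^{6}(M)$, which is why I would anchor it in the geometric description of $\check{\pi}$ on a fiber rather than in formal symbol-pushing. A secondary thing to verify is the internal consistency of the $\Pp(\cdot)$ convention (tautological sub-line-bundle versus quotient): the sign in $c_{1}(\OO_{\Pp(T\PP)}(-1))=2h-\hh$ must be compatible with the relation $\xi^{2}-c_{1}(E)\xi+c_{2}(E)=0$ used to present $H^{*}(M)$, and I would confirm $(2h-\hh)^{2}-3h(2h-\hh)+3h^{2}=h^{2}-h\hh+\hh^{2}=0$ before relying on that value.
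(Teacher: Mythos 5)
Your proof is correct and follows essentially the same route as the paper's: items (1) and (2) are read off from the Chern classes of the sequences in diagram \eqref{diagrama-del-fibrado-normal-de-la-distribucion-de-contacto} and \eqref{secuencia-de-distribucion-de-contacto} together with Lemmas \ref{tautological-bundle} and \ref{normal-bundle-of-the-contac-distribution}, and item (3) comes from identifying $h^{2}$ with a fiber class and then applying the ring relation. You simply supply the intermediate computations (e.g.\ $c_{1}(T_{M|S})=2\hh-h$ and the normalization $h^{2}\hh=1$ via $\check{\pi}$) that the paper's one-line proof leaves implicit.
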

\begin{proof}
The first asertion is a consequence of the diagram (\ref{diagrama-del-fibrado-normal-de-la-distribucion-de-contacto}) and lemma (\ref{tautological-bundle}). The second one follows from the sequence (\ref{secuencia-de-distribucion-de-contacto}) and lemma (\ref{normal-bundle-of-the-contac-distribution}). For the last equality observe that $h^{2}$ is the class of a fiber of $\pi$, then $h^{2}\hh=1$.
\end{proof}

\begin{example}\label{equaçao-das-retas}
Let us consider the differential equation satisfied by the lines: 
$$
y''=0
$$
We usually call $\LL$ the associated foliation in $M$. This foliation is given in coordinates $(x,y;p)$ by the vector field: $X_{\LL}=\frac{\partial}{\partial x} + p \frac{\partial}{\partial y}.$\\

Note that this foliation is tangent to the fibers of the projection $\check{\pi}:M\rightarrow \Pd$. Since $T^{*}\LL=\OO_{M}\left((X_{\LL})_{\infty}-(X_{\LL})_{0}\right)$, looking to $X_{\LL}$ in the other charts of M, we conclude
$$
T^{*}\LL=\OO_{M}(-2,1)
$$
\end{example}

\begin{example}\label{foliacion-vertical}
Let $\VV$ be the foliation tangent to the fibers of the projection $\pi:M\rightarrow \PP$. This foliation is given in the chart $(x,y;p)$ by the vectoy field $X_{\VV}=\frac{\partial}{\partial p}$, so $\VV$ is clearly tangent to $\DD$. In this case one has
$$
T^{*}\VV=\OO_{M}(1,-2).
$$
\end{example}
Observe that the follow sequence is exact:
\begin{equation}\label{distribucion-de-contacto-rectas-y-vertical}
\xymatrix{
0 \ar[r]& T\LL \ar[r]&\DD \ar[r]& T\VV \ar[r] & 0
}
\end{equation}

\subsection{Some intersection formulas}
In order to understand the geometric \-mea\-ning of $T\FF$ we give here some formulas concerning its intersection with curves and surfaces. Observe that for a second order differential equation $\FF$ on $\PP$, we can always write $T^{*}\FF=\OO_{M}(a,b)$ for some integers $a$ ,$b$. In this case we say that $\FF$ has bidegree $(a,b)$.\\  

Let $T\subseteq M$ be a compact surface, possibly singular, such that each irreducible component of $T$ is not $\FF$-invariant. We define the \textit{tangency curve} between $\FF$ and $T$ as the divisor on $T$ given locally by 
$$
tang(\FF,T)=\{X(F)|_{T}=0\}
$$ 
where $\{F=0\}$ is a local equation of $T$ and $X$ is a local holomorphic vector field generating $\FF$. Observe that this divisor can be defined for any complex compact subvariety $T\subseteq M$ of codimension 1 and any foliation by curves $\FF$ on a complex manifold $M$ (we do not need here the condition of tangency with $\DD$). Following the same proof of \cite{Brunella}, proposition 2 on page 23, one has:

\begin{proposition}\label{tangencia-entre-ecuacion-diferencial-y-superficie}
The tangency divisor between $\FF$ and $T$ is given by
$$
tang(\FF,T)=T^{*}\FF|_{T} + N_{T}.
$$
\end{proposition}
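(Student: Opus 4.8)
The plan is to follow the cocycle computation of Brunella's book, adapted to a codimension-one subvariety $T$ of the threefold $M$. First I would fix an open cover $\{U_i\}$ of $M$ trivializing everything in sight: on each $U_i$ the foliation $\FF$ is generated by a local holomorphic vector field $X_i$, and $T$ is cut out by a local equation $F_i=0$. On overlaps one has $X_i=g_{ij}X_j$, where $\{g_{ij}\}$ is precisely the cocycle of $T^{*}\FF$ (since $T\FF$ has local frames given by the generators $X_i$), and $F_i=f_{ij}F_j$, where $\{f_{ij}\}$ is the cocycle of $\OO_M(T)$, whose restriction to $T$ represents $N_T$.

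The heart of the argument is a one-line Leibniz computation. Applying $X_i$ to the identity $F_i=f_{ij}F_j$ gives
\[
X_i(F_i)=g_{ij}\bigl(X_j(f_{ij})\,F_j+f_{ij}\,X_j(F_j)\bigr),
\]
and restricting to $T$, where $F_j\equiv 0$, kills the first term, leaving
\[
X_i(F_i)|_T=\bigl(g_{ij}f_{ij}\bigr)\big|_T\cdot X_j(F_j)|_T .
\]
This shows that the local functions $\{X_i(F_i)|_T\}$ patch into a global section of the line bundle on $T$ whose transition cocycle is $\{(g_{ij}f_{ij})|_T\}$, that is, of $T^{*}\FF|_T\otimes N_T$. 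Its zero locus is by definition $tang(\FF,T)$, and therefore $tang(\FF,T)=T^{*}\FF|_T+N_T$ as divisor classes.

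Two points need care. First, I must check the claim that the $\{g_{ij}\}$ really are the transition functions of $T^{*}\FF$ and not of $T\FF$: this is the bookkeeping that a line bundle with local frames $X_i$ satisfying $X_i=g_{ij}X_j$ is glued by $g_{ij}^{-1}$, so its dual $T^{*}\FF$ is glued by $g_{ij}$. Second, I must ensure that $tang(\FF,T)$ is an honest effective divisor and not all of $T$; this is exactly where the hypothesis that no irreducible component of $T$ is $\FF$-invariant enters, guaranteeing $X_i(F_i)|_T\not\equiv 0$ on each component, and the transition rule above then yields independence of the divisor from the choices of $X_i$ and $F_i$. I expect the only genuinely delicate step to be this verification of well-definedness together with the duality convention; the geometric content is entirely contained in the Leibniz identity.
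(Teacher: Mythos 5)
Your proof is correct and coincides with the paper's approach: the paper gives no independent argument but simply invokes the proof of Proposition~2, p.~23 of \cite{Brunella}, which is exactly the Leibniz/cocycle computation you carry out, including the duality bookkeeping showing that $\{g_{ij}\}$ glues $T^{*}\FF$ and that non-invariance of the components of $T$ makes $tang(\FF,T)$ a well-defined effective divisor.
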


Let us consider now $\widetilde{C} \subseteq M$ a smooth compact curve, tangent to $\DD$, such that each component of $\widetilde{C}$ is not $\FF$-invariant and supose that the codimension of the singular set of $\FF$ is at least 2. Then we have the following diagram:

\[
\xymatrix{
&& 0 \ar[d]\\
&& T\FF|_{\widetilde{C}}\ar[d] \ar^{\sigma}[dr]\\
0 \ar[r]&  T\widetilde{C}\ar[r]&\DD|_{\widetilde{C}} \ar[r] &  \mathcal{N}_{\widetilde{C}}\ar[r]& 0
}
\]
where $\mathcal{N}_{\widetilde{C}}$ is the normal bundle of $\widetilde{C}$ in $\DD$. Observe that the map $\sigma$ vanishes exactly at the points where $\FF$ is tangent to $\widetilde{C}$. We define the tangency index between $\FF$ and $\widetilde{C}$ at a point $x \in \widetilde{C}$ as the vanishing order of the section induced by $\sigma$, $\OO_{\widetilde{C}}\rightarrow T^{*} \FF |_{\widetilde{C}}\otimes \mathcal{N}_{\widetilde{C}}$ (which we still denote by $\sigma$) at $x$
$$
tang(\FF,\widetilde{C},x)=ord_{x}(\sigma).
$$
Hence we can set 
$$
tang(\FF, \widetilde{C})=\sum_{x \in \widetilde{C}}tang(\FF, \widetilde{C}, x). $$
If one writes locally $\FF$ induced by $X=B\frac{\partial}{\partial x}+pB\frac{\partial}{\partial y}+A\frac{\partial}{\partial p}
$ and $\widetilde{C}$ parametrized by $(\gamma_{1},\gamma_{2},\gamma_{3})$, where $\gamma_{1}' \gamma_{3}=\gamma_{2}'$, then the points of tangency are exactly the points where $B(\gamma_{1},\gamma_{2},\gamma_{3})\gamma_{3}'-A(\gamma_{1},\gamma_{2},\gamma_{3})\gamma_{1}'$ vanishes and the tangency index at this points is the order of vanishing of this function; in particular $tang(\FF, \widetilde{C},x)\geq 0$. On the other hand, taking Chern classes to the previous sequence, one conclude the following proposition.

\begin{proposition}\label{tangencia-entre-ecuacion-diferencial-y-curva-no-invariante}
The tangency index between $\FF$ and a smooth compact currve $\widetilde{C}$ tangent to $\DD$ is
$$
tang(\FF, \widetilde{C})= T^{*}\FF. \widetilde{C} + det(\DD).\widetilde{C} - \chi(\widetilde{C})
$$
where $\chi(\widetilde{C})$ is the Euler characteristic of $\widetilde{C}$.
\end{proposition}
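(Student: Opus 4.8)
The plan is to extract the stated Chern-class identity from the short exact diagram that precedes the proposition. The key object is the composite map $\sigma\colon \OO_{\widetilde{C}}\to T^{*}\FF|_{\widetilde{C}}\otimes\NN_{\widetilde{C}}$, obtained by restricting $T\FF|_{\widetilde{C}}\hookrightarrow \DD|_{\widetilde{C}}$ and projecting onto $\NN_{\widetilde{C}}$. I would first argue that $\sigma$ is a genuinely nonzero section of the line bundle $T^{*}\FF|_{\widetilde{C}}\otimes\NN_{\widetilde{C}}$: since each component of $\widetilde{C}$ is not $\FF$-invariant, $\FF$ is not everywhere tangent to $\widetilde{C}$, so $\sigma$ does not vanish identically. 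This is exactly the hypothesis that lets me pass from a section to a degree count.

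Once $\sigma$ is known to be a nonzero section of a line bundle on the compact curve $\widetilde{C}$, its total vanishing order equals the degree of that line bundle. By definition $tang(\FF,\widetilde{C})=\sum_{x}ord_{x}(\sigma)$, so I would write
\[
tang(\FF,\widetilde{C})=\deg\bigl(T^{*}\FF|_{\widetilde{C}}\otimes\NN_{\widetilde{C}}\bigr)
= T^{*}\FF\cdot\widetilde{C}+\deg\NN_{\widetilde{C}},
\]
using that the degree of a restricted line bundle is the intersection number $T^{*}\FF\cdot\widetilde{C}$. It remains to identify $\deg\NN_{\widetilde{C}}$ with $det(\DD)\cdot\widetilde{C}-\chi(\widetilde{C})$. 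For this I invoke the bottom row of the diagram, the exact sequence $0\to T\widetilde{C}\to \DD|_{\widetilde{C}}\to\NN_{\widetilde{C}}\to 0$, and take first Chern classes (degrees) on $\widetilde{C}$: this gives $\deg\NN_{\widetilde{C}}=\deg(\DD|_{\widetilde{C}})-\deg(T\widetilde{C})$. Now $\deg(\DD|_{\widetilde{C}})=c_{1}(\DD)\cdot\widetilde{C}=det(\DD)\cdot\widetilde{C}$, and $\deg(T\widetilde{C})=\chi(\widetilde{C})$ since $\widetilde{C}$ is a smooth compact curve (Gauss--Bonnet / Riemann--Roch on each component). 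Substituting yields precisely the asserted formula.

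Assembling the two computations gives
\[
tang(\FF,\widetilde{C})=T^{*}\FF\cdot\widetilde{C}+det(\DD)\cdot\widetilde{C}-\chi(\widetilde{C}),
\]
which is the claim. The main obstacle, and the one point requiring genuine care rather than bookkeeping, is the first step: justifying that $\sigma$ is nonzero so that the sum of local vanishing orders really computes the degree of the line bundle. This is where the smoothness of $\widetilde{C}$, its tangency to $\DD$ (so that it lands inside $\DD$ and the map $\sigma$ is defined), and the non-invariance of each component are all used. The smoothness hypothesis also guarantees that $T\widetilde{C}$ is a bona fide line subbundle, so that the bottom sequence is exact as a sequence of bundles and the additivity of degrees applies without correction terms; the rest is a routine Chern-class computation on the curve.
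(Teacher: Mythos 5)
Your argument is correct and is exactly the one the paper intends: the paper's proof consists of the single remark ``taking Chern classes to the previous sequence, one concludes the proposition,'' and your write-up simply spells out that computation (degree of the line bundle $T^{*}\FF|_{\widetilde{C}}\otimes \NN_{\widetilde{C}}$ as the sum of vanishing orders of the nonzero section $\sigma$, plus additivity of degrees in the sequence $0\to T\widetilde{C}\to \DD|_{\widetilde{C}}\to \NN_{\widetilde{C}}\to 0$ together with $\deg T\widetilde{C}=\chi(\widetilde{C})$). No gaps; same approach as the paper, just more explicit.
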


Now we can give a geometric interpretation of the bidegree of a second order differential equation.

\begin{corollary}\label{el-bigrado-de-una-ecuacion-diferencial}
Let $\FF$ be a saturated second order differential equation of bidegree $(a,b)$, which is neither $\LL$ (lines) nor $\VV$ (fibers) and denote by $F$ and $\widetilde{l}$ a fiber of $\pi$ and the lifting of a line in $\PP$ respectively, then:
\begin{eqnarray*}
a=tang(\FF, \widetilde{l})+1,\\
b=tang(\FF, F)+1.
\end{eqnarray*}
In particular one obtains: $a \geq 1$ and $b \geq 1$. 
\end{corollary}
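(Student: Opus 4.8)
The plan is to apply Proposition \ref{tangencia-entre-ecuacion-diferencial-y-curva-no-invariante} to the two curves $\widetilde{l}$ and $F$, after first arranging that their classes are represented by non-$\FF$-invariant curves. Since $\FF\neq\LL$, not every line can be a solution: if the lift of every line were $\FF$-invariant, then $\FF$ and $\LL$ would share an open dense set of common leaves (the lifts of lines cover $M$), forcing $\FF=\LL$; hence a generic line has non-invariant lift $\widetilde{l}$, and dually, since $\FF\neq\VV$, a generic fiber $F$ is non-invariant. Both $\widetilde{l}$ and $F$ are smooth rational curves, so $\chi(\widetilde{l})=\chi(F)=2$, and $\widetilde{l}$ is tangent to $\DD$ (lifts of curves are tangent to $\DD$, by the first lemma of this section); thus the hypotheses of the proposition are met, using also that a saturated $\FF$ has $\mathrm{codim}\,\mathrm{Sing}(\FF)\geq 2$.

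The core of the computation is to identify the intersection numbers $h\cdot\widetilde{l}$, $\hh\cdot\widetilde{l}$, $h\cdot F$, $\hh\cdot F$, for which I would use the realization of $M$ as the incidence variety inside $\PP\times\Pd$. Under this identification the lift of a line $l$, with dual point $\check{l}\in\Pd$, is $\widetilde{l}=l\times\{\check{l}\}$, since a line has constant tangent direction; therefore $\widetilde{l}$ maps isomorphically onto a line under $\pi$ and collapses to a point under $\check{\pi}$, giving $h\cdot\widetilde{l}=1$ and $\hh\cdot\widetilde{l}=0$. Dually, the fiber over $z\in\PP$ is $F=\{z\}\times\check{z}$, where $\check{z}\subseteq\Pd$ is the line parametrizing lines through $z$; hence $F$ collapses under $\pi$ and maps isomorphically onto a line under $\check{\pi}$, giving $h\cdot F=0$ and $\hh\cdot F=1$.

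Substituting these into Proposition \ref{tangencia-entre-ecuacion-diferencial-y-curva-no-invariante}, and using $\det(\DD)=N_{\DD}=\OO_{M}(1,1)=\OO_M(h+\hh)$ from Lemma \ref{fibrado-canonico-de-M-fibrado-normal-y-determinante-de-D} together with $T^{*}\FF=\OO_{M}(a,b)=\OO_M(ah+b\hh)$, I obtain
\[
tang(\FF,\widetilde{l})=(ah+b\hh)\cdot\widetilde{l}+(h+\hh)\cdot\widetilde{l}-2=a+1-2=a-1,
\]
\[
tang(\FF,F)=(ah+b\hh)\cdot F+(h+\hh)\cdot F-2=b+1-2=b-1,
\]
which rearrange to the two displayed equalities. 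Finally, since every local tangency index is non-negative (as observed just before Proposition \ref{tangencia-entre-ecuacion-diferencial-y-curva-no-invariante}), both $tang(\FF,\widetilde{l})$ and $tang(\FF,F)$ are $\geq 0$, yielding $a\geq 1$ and $b\geq 1$.

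I expect the only genuine subtlety to be the geometric identification of the curve classes, namely seeing that the lift of a line degenerates to a point under $\check{\pi}$ while a $\pi$-fiber maps isomorphically onto a line of $\Pd$; once the incidence-variety picture is in place everything reduces to a direct substitution into the tangency formula.
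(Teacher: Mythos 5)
Your proof is correct and follows essentially the same route as the paper: the corollary is a direct application of Proposition \ref{tangencia-entre-ecuacion-diferencial-y-curva-no-invariante} with $\det(\DD)=\OO_M(1,1)$, $\chi(\widetilde{l})=\chi(F)=2$, and the intersection numbers of $h,\hh$ with the classes $[\widetilde{l}]=\hh^2$ and $[F]=h^2$ (which you obtain geometrically from the incidence-variety picture rather than from the ring relations $h^3=\hh^3=0$, $h^2\hh=h\hh^2=1$ of Lemma \ref{fibrado-canonico-de-M-fibrado-normal-y-determinante-de-D}, but these give the same numbers). Your preliminary checks that generic $\widetilde{l}$ and $F$ are non-invariant and that the local tangency indices are non-negative are exactly the points the paper relies on implicitly.
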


As an example, we obtain the class of the cotangent bundle of the second order differential equation $\LL$ of the example (\ref{equaçao-das-retas}). Let us write $T^{*}\LL= \OO_{M}(a,b)$ and apply proposition (\ref{tangencia-entre-ecuacion-diferencial-y-curva-no-invariante}) for $\widetilde{C}=F$, a fiber of $\pi$, to obtain
$$
0=tang(\LL,F)=(ah+b\hh).h^{2}+(h+\hh).h^{2}-\chi(F)=b-1.
$$
On the other hand, observing that the tangency curve between $\LL$ and $H=\pi^{-1}(l)$, where $l$ is a line on $\PP$, is exactly $\widetilde{l}$, the lifting of $l$, we can apply proposition ($\ref{tangencia-entre-ecuacion-diferencial-y-superficie}$) to obtain
$$
\hh ^{2}=[tang(\LL,H)]=(ah+\hh).h+h^{2}=(a+2)h^{2}+\hh^{2}.
$$
We have just proved that $T^{*}\LL=\OO_{M}(-2,1)$. Of course, we can do a similar argument to obtain $T^{*}\VV=\OO_{M}(1,-2)$.\\

Take now $\FF_{1}$ and $\FF_{2}$ two second order differential equations on $S$ (not necessarily $\PP$). We define the \textit{tangency divisor} between $\FF_{1}$ and $\FF_{2}$ as the divisor on $M$ given locally by 
$$
tang(\FF_{1},\FF_{2})=\{X_{1}\wedge X_{2}=0\}
$$ 
where $X_{i}$ is a local holomorphic vector field generating $\FF_{i}$. Note that for arbitrary one dimensional foliations in $M$, the zero set of $X_{1}\wedge X_{2}$ is not a divisor, but since second order differential equations are given by global sections $X_{\FF_{i}}\in H^{0}(M, \DD\otimes T^{*} \FF_{i})$ we have that:
$$
X_{\FF_{1}}\wedge X_{\FF_{2}}\in H^{0}(M, det(\DD)\otimes T^{*} \FF_{1}\otimes T^{*}\FF_{2}).
$$
We have just proved the following proposition.

\begin{proposition}\label{tangencia-entre-dos-ecuaciones-diferenciales}
The tangency divisor between two second order differential equations is given by
$$
tang(\FF_{1}, \FF_{2})=c_{1}(det(\DD)\otimes T^{*}\FF_{1}\otimes T^{*}\FF_{2}).
$$
\end{proposition}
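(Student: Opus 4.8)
The plan is to interpret the local expression $X_{1}\wedge X_{2}$ as a single global section of a line bundle, whose zero locus is then automatically a divisor whose class is the first Chern class of that bundle. The crucial feature that distinguishes second order differential equations from arbitrary foliations by curves on the three-fold $M$ is that both $\FF_{1}$ and $\FF_{2}$ are tangent to the rank two distribution $\DD$, and this is precisely what makes the tangency locus a divisor.

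First I would reinterpret the defining data. By definition each $\FF_{i}$ is given by an element $X_{\FF_{i}}\in H^{0}(M, \DD\otimes T^{*}\FF_{i})$; equivalently, the inclusion $T\FF_{i}\hookrightarrow TM$ factors through $\DD$, yielding a bundle map $T\FF_{i}\rightarrow \DD$, that is, a section of $\DD$ twisted by $T^{*}\FF_{i}$. Locally, after trivializing the line bundle $T^{*}\FF_{i}$, this section is exactly the vector field $X_{i}$ generating $\FF_{i}$, now viewed as taking values in the rank two subbundle $\DD\subseteq TM$.

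Next I would form the wedge. Since $\DD$ has rank two, the wedge of two $\DD$-valued sections lands in $\wedge^{2}\DD=det(\DD)$. Keeping track of the twists, $X_{\FF_{1}}\wedge X_{\FF_{2}}$ is therefore a global section
$$
X_{\FF_{1}}\wedge X_{\FF_{2}}\in H^{0}(M, det(\DD)\otimes T^{*}\FF_{1}\otimes T^{*}\FF_{2}).
$$
Because this is a section of a line bundle, its zero locus is a divisor whose class is $c_{1}(det(\DD)\otimes T^{*}\FF_{1}\otimes T^{*}\FF_{2})$. It then remains to identify this zero locus with $tang(\FF_{1},\FF_{2})$, which is immediate: in any local chart, after trivializing the twists, the section reduces to $X_{1}\wedge X_{2}$, whose vanishing is by definition the tangency divisor.

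The main obstacle, and the only place where tangency to $\DD$ is genuinely used, is the assertion that the tangency locus is a divisor at all. For two generic vector fields on the three-fold $M$, the wedge $X_{1}\wedge X_{2}$ is a section of $\wedge^{2}TM$, and the locus where the two fields become parallel has codimension two rather than one. Confining both fields to the rank two distribution $\DD$ collapses this to the single scalar equation $X_{1}\wedge X_{2}=0$ valued in the line bundle $det(\DD)$ (suitably twisted), which cuts out a divisor. Once this is in place the class computation is automatic and the stated formula follows.
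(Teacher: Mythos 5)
Your proof is correct and is essentially identical to the paper's argument: the paper likewise observes that since each $X_{\FF_{i}}$ is a global section of $\DD\otimes T^{*}\FF_{i}$ with $\DD$ of rank two, the wedge $X_{\FF_{1}}\wedge X_{\FF_{2}}$ is a global section of $det(\DD)\otimes T^{*}\FF_{1}\otimes T^{*}\FF_{2}$, so its zero locus is a divisor representing the stated Chern class. Your added remark that tangency to $\DD$ is exactly what forces the tangency locus to have codimension one matches the paper's own comment that for arbitrary one dimensional foliations on $M$ the set $\{X_{1}\wedge X_{2}=0\}$ need not be a divisor.
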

\section{The space of second order differential equations}
We denote by $\EE (a,b)=\Pp H^{0}(M, \DD \otimes \OO_{M}(a,b))$ the space of second order \-differen\-tial equations with bidegree $(a,b)$ in $\PP$. Let us consider the following maps
\begin{eqnarray*}
R_{1}(a,b): \Pp H^{0}(M,\OO_{M}(a,b))\rightarrow \EE(a-2,b+1),& R_{1}(F)=F. X_{\LL},\\ 
R_{2}(a,b): \Pp H^{0}(M,\OO_{M}(a,b))\rightarrow \EE(a+1,b-2),& R_{2}(F)=F. X_{\VV} 
\end{eqnarray*}
where $\LL$ and $\VV$ are the foliations of examples (\ref{equaçao-das-retas}), (\ref{foliacion-vertical}) respectively.\\

Observe that the class of a surface in $M$, $[S]=ah+b\hh$ satisfies: $a, b \geq 0$ (because $a= S.\widetilde{l}$, $b=S.F$). Using this observation and corollary (\ref{el-bigrado-de-una-ecuacion-diferencial}) one can describe the space $\EE(a,b)$ for some bidegrees:

\begin{eqnarray*}
\EE(a-2,b+1)=Im R_{1}(a,b) for \hspace{0.1cm} 0\leq a \leq 2,\hspace{0.1cm} b \geq 0;\\
\EE(a+1,b-2)=Im R_{2}(a,b) for \hspace{0.1cm} 0\leq b \leq 2,\hspace{0.1cm} a \geq 0,
\end{eqnarray*}

In order to find the dimension of this spaces, we have the following proposition.

\begin{proposition}\label{cohomologia-de-O_{M}(a,b)}
For any $a,b \geq 0$, the equalities
$$
h^{0}(\OO_{M}(a,b))=\frac{(a+1)(b+1)(a+b+2)}{2}
$$
and
$$
h^{i}(\OO_{M}(a,b))=0, \hspace{0.1cm} for \hspace{0.1cm} i\geq 1,
$$
hold true.
\end{proposition}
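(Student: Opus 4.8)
The plan is to establish the higher vanishing first and then read off $h^0$ as the Euler characteristic, computed by Hirzebruch--Riemann--Roch on the threefold $M$.

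For the vanishing I would write $\OO_M(a,b) = K_M \otimes \OO_M(a+2,b+2)$, using the identity $K_M = \OO_M(-2,-2)$ from Lemma \ref{fibrado-canonico-de-M-fibrado-normal-y-determinante-de-D}. Since $M$ sits inside $\PP \times \Pd$ as the incidence variety and $\OO_M(p,q)$ is the restriction of $\OO_{\PP\times\Pd}(p,q)$, the bundle $\OO_M(a+2,b+2)$ is ample for all $a,b\geq 0$, being the restriction to a closed subvariety of an ample bundle on the product (here the point is that both indices $a+2,b+2$ are at least $1$). Kodaira vanishing then yields $H^i\big(M, K_M\otimes\OO_M(a+2,b+2)\big) = H^i(M,\OO_M(a,b)) = 0$ for every $i\geq 1$. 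Consequently $h^0(\OO_M(a,b)) = \chi(\OO_M(a,b))$, and it remains to compute this Euler characteristic.

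For the Euler characteristic I would apply Hirzebruch--Riemann--Roch, $\chi(\OO_M(D)) = \int_M e^{D}\,\mathrm{td}(TM)$ with $D = ah + b\hh$. The input is the Chern data of $M$, which I extract from the relative tangent sequence $0\to T_{M|S}\to TM\to \pi^{*}T\PP\to 0$: from $T_{M|S}=\OO_M(-1,2)$ and $c(T\PP)=1+3h+3h^{2}$ one gets $c_1(TM)=2h+2\hh$ (consistent with $-K_M$) and $c_2(TM)=6h\hh$. Combining these with the intersection data $h^{3}=\hh^{3}=0$, $h^{2}\hh=h\hh^{2}=1$ and the relation $h^{2}+\hh^{2}=h\hh$ coming from the description of $H^{*}(M)$, I would isolate the degree-three part of $e^{D}\,\mathrm{td}(TM)$ and integrate term by term. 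After simplification the contributions sum to $\tfrac12\big(a^{2}b+ab^{2}+a^{2}+b^{2}+4ab+3a+3b+2\big)$, which factors exactly as $\tfrac12(a+1)(b+1)(a+b+2)$, giving the claimed formula.

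I expect the only delicate point to be the ampleness claim that licenses Kodaira: individually $h$ and $\hh$ are merely nef, since they are pulled back from $\PP$ and $\Pd$, so the strict positivity has to be supplied by the product embedding and the fact that both indices are positive. The remaining HRR bookkeeping is routine once the Chern classes are in hand; a convenient sanity check is $\int_M c_3(TM)=6$, the topological Euler number of $M$. An alternative route, if one prefers to avoid Kodaira, is to push forward along the $\mathbb{P}^{1}$-bundle $\pi$ to get $H^i(M,\OO_M(a,b))\cong H^i\big(\PP,\Sym^{b}\Omega^{1}_{\PP}(a+2b)\big)$ and then prove both the vanishing and the dimension count on $\PP$ through the symmetric powers of the Euler sequence; there, however, the symmetric-power computation becomes the main obstacle, which the Kodaira--HRR argument sidesteps entirely.
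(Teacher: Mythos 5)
Your argument is correct, but it is a genuinely different route from the paper's. The paper works with the ideal-sheaf sequence of the incidence variety: setting $X=\PP\times\Pd$, it twists $0\to\OO_{X}(-1,-1)\to\OO_{X}\to\OO_{M}\to 0$ to get $0\to\OO_{X}(a-1,b-1)\to\OO_{X}(a,b)\to\OO_{M}(a,b)\to 0$, and then both claims fall out of the long exact sequence together with the K\"unneth computation $h^{0}(\OO_{X}(a,b))=\tfrac{(a+1)(a+2)(b+1)(b+2)}{4}$ and $h^{i}(\OO_{X}(a,b))=0$ for $i\geq 1$, valid for $a,b\geq -1$; in particular $h^{0}(\OO_{M}(a,b))$ is the difference $\tfrac{(a+1)(a+2)(b+1)(b+2)}{4}-\tfrac{a(a+1)b(b+1)}{4}=\tfrac{(a+1)(b+1)(a+b+2)}{2}$. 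You instead obtain the vanishing from Kodaira --- the identification $\OO_{M}(a,b)=K_{M}\otimes\OO_{M}(a+2,b+2)$ and the ampleness of $\OO_{M}(a+2,b+2)$ as the restriction of an ample bundle on the product are both correct, and $M$ is smooth projective so Kodaira applies --- and the dimension from Hirzebruch--Riemann--Roch. I checked your Chern data ($T_{M|S}=\OO_{M}(-1,2)$, $c_{1}(TM)=2h+2\hh$, $c_{2}(TM)=6h\hh$, $\int_{M}c_{3}(TM)=6$) and the degree-three integration against $h^{3}=\hh^{3}=0$, $h^{2}\hh=h\hh^{2}=1$: the result $\tfrac12(a^{2}b+ab^{2}+a^{2}+b^{2}+4ab+3a+3b+2)$ is right and does factor as claimed. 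The trade-off: the paper's proof is shorter and more elementary, with a single Koszul-type sequence doing all the work at once, while yours needs Kodaira vanishing and a threefold Riemann--Roch computation; in exchange, your method does not depend on having an explicit resolution of $\OO_{M}$ inside a product of projective spaces, and is essentially the strategy one would be forced to use for $\Pp(TS)$ over a general surface $S$ as in the paper's last section.
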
 
\begin{proof}
Let us denote by $X=\PP \times \Pd$ and consider $M$ as the incidence variety on $X$. Then clearly we have
\begin{equation}
\xymatrix{
0 \ar[r] &\OO_{X}(-1,-1) \ar[r]& \OO_{X} \ar[r] & \OO_{M} \ar[r] & 0
}
\end{equation}
(here the definition of $\OO_{X}(a,b)$ is the obvious one), and therefore
\[
\xymatrix{
0 \ar[r] &\OO_{X}(a-1,b-1) \ar[r]& \OO_{X}(a,b) \ar[r] & \OO_{M}(a,b) \ar[r] & 0.
}
\]
To conclude is enough to observe that
$$
h^{0}(\OO_{X}(a,b))=\frac{(a+1)(a+2)(b+1)(b+2)}{4}, \hspace{0.1cm} for \hspace{0.1cm} a, b \geq -1
$$
and
$$
h^{i}(\OO_{X}(a,b))=0, \hspace{0.1cm}  for \hspace{0.1cm} i \geq 1 \hspace{0.1cm}  and \hspace{0.1cm} a, b \geq -1.
$$
\end{proof}

For the other cases, one can also calculate the dimension  of the space of di\-ffe\-ren\-tial equations. 

\begin{proposition}\label{dimension-del-espacio-de-ecuaciones-diferenciales}
For $a,b \geq 1$ we have
$$
dim \EE (a,b) = \frac{1}{2}(2a^{2}b+2ab^{2}+3a^{2}+3b^{2}+12ab+9a+9b)-1.
$$
\end{proposition}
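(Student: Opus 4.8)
The plan is to reduce the computation of $\dim\EE(a,b)$ to the cohomology of line bundles on $M$, for which the previous proposition already supplies closed formulas. Since $\EE(a,b)=\Pp H^{0}(M,\DD\otimes\OO_{M}(a,b))$, we have $\dim\EE(a,b)=h^{0}(M,\DD\otimes\OO_{M}(a,b))-1$, so everything comes down to computing this single $h^{0}$.

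First I would tensor the exact sequence (\ref{distribucion-de-contacto-rectas-y-vertical}), namely $0\to T\LL\to\DD\to T\VV\to 0$, by the line bundle $\OO_{M}(a,b)$. Because tensoring by a line bundle is exact, this yields
\[
0\to T\LL\otimes\OO_{M}(a,b)\to\DD\otimes\OO_{M}(a,b)\to T\VV\otimes\OO_{M}(a,b)\to 0.
\]
Using the values $T^{*}\LL=\OO_{M}(-2,1)$ and $T^{*}\VV=\OO_{M}(1,-2)$ computed above, i.e.\ $T\LL=\OO_{M}(2,-1)$ and $T\VV=\OO_{M}(-1,2)$, the two outer terms become $\OO_{M}(a+2,b-1)$ and $\OO_{M}(a-1,b+2)$. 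The point to record here is that for $a,b\geq 1$ both bidegrees $(a+2,b-1)$ and $(a-1,b+2)$ have nonnegative entries, which is exactly the range in which the previous proposition applies.

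Then I would pass to the long exact sequence in cohomology. The key step is to observe that $h^{1}(\OO_{M}(a+2,b-1))=0$ by the previous proposition (its entries being nonnegative), so the connecting homomorphism vanishes and $h^{0}$ is additive along the sequence:
\[
h^{0}(\DD\otimes\OO_{M}(a,b))=h^{0}(\OO_{M}(a+2,b-1))+h^{0}(\OO_{M}(a-1,b+2)).
\]
Substituting $h^{0}(\OO_{M}(a',b'))=\tfrac{(a'+1)(b'+1)(a'+b'+2)}{2}$ gives two terms, $\tfrac{(a+3)\,b\,(a+b+3)}{2}$ and $\tfrac{a\,(b+3)\,(a+b+3)}{2}$, which conveniently share the common factor $(a+b+3)$.

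Finally, factoring out $(a+b+3)/2$ collapses the sum to $\tfrac{(a+b+3)(2ab+3a+3b)}{2}$; expanding this product and subtracting $1$ produces the stated expression $\tfrac{1}{2}(2a^{2}b+2ab^{2}+3a^{2}+3b^{2}+12ab+9a+9b)-1$. The calculation is entirely elementary, so there is no genuine obstacle here; the only point demanding care is the vanishing $h^{1}(\OO_{M}(a+2,b-1))=0$, which is what makes $h^{0}$ additive and for which the hypothesis $b\geq 1$ (ensuring $b-1\geq 0$) is used in an essential way.
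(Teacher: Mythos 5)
Your proof is correct and follows essentially the same route as the paper: twist the exact sequence $0\to T\LL\to\DD\to T\VV\to 0$ by $\OO_{M}(a,b)$ to get $0\to\OO_{M}(a+2,b-1)\to\DD\otimes\OO_{M}(a,b)\to\OO_{M}(a-1,b+2)\to 0$, then apply Proposition \ref{cohomologia-de-O_{M}(a,b)}. Your explicit remark that the vanishing of $h^{1}(\OO_{M}(a+2,b-1))$ is what makes $h^{0}$ additive is a detail the paper leaves implicit, but it is the same argument.
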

\begin{proof}
From sequence (\ref{distribucion-de-contacto-rectas-y-vertical}) one gets
\[
\xymatrix{
0 \ar[r] &\OO_{M}(a+2,b-1) \ar[r]& \DD\otimes \OO_{M}(a,b) \ar[r] & \OO_{M}(a-1,b+2) \ar[r] & 0.
}
\]
The proposition follows from proposition (\ref{cohomologia-de-O_{M}(a,b)}).
\end{proof}
Observe that when $a,b \geq 1$ one has the inclusions
\begin{eqnarray*}
Im R_{1}(a+2,b-1) = A \subseteq \EE(a,b),\\
Im R_{2}(a-1,b+2) = B \subseteq \EE(a,b).
\end{eqnarray*}

After counting dimensions, we conclude that $\EE(a,b)$ is covering by the lines joining points of $A$ and $B$, i.e. $\EE(a,b)=Join(A,B)$.
\begin{proposition}
Every element of $\EE(a,b)$ is a linear combination of an element of $A$ and an element of $B$.
\end{proposition}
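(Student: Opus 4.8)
The plan is to exhibit an explicit global splitting of the exact sequence (\ref{distribucion-de-contacto-rectas-y-vertical}), after which tensoring by $\OO_M(a,b)$ and passing to global sections identifies $H^{0}(M,\DD\otimes\OO_M(a,b))$ with a direct sum whose two summands are exactly the affine cones over $A$ and $B$. The whole proposition then drops out of additivity of $H^0$ over a direct sum.

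First I would recall that the inclusion $T\LL\hookrightarrow\DD$ in (\ref{distribucion-de-contacto-rectas-y-vertical}) is realized by the generating section $X_{\LL}$ of $\LL$, while the section $X_{\VV}\in H^{0}(M,\DD\otimes T^{*}\VV)$ realizes a bundle map $T\VV\to\DD$, namely the inclusion of the tangent bundle of the vertical foliation into $\DD$. In the chart $(x,y,p)$ one has $X_{\LL}=\partial/\partial x+p\,\partial/\partial y$ and $X_{\VV}=\partial/\partial p$; both lie in $\DD=\ker(dy-p\,dx)$ and are pointwise linearly independent. Hence $T\LL$ and $T\VV$ are fiberwise transverse line subbundles of the rank-two bundle $\DD$, so the quotient $\DD/T\LL$ is identified with $T\VV$ precisely through $X_{\VV}$, and $X_{\VV}\colon T\VV\to\DD$ is a global holomorphic splitting of the projection $\DD\to T\VV$. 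This gives $\DD\cong T\LL\oplus T\VV=\OO_M(2,-1)\oplus\OO_M(-1,2)$.

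Tensoring by $\OO_M(a,b)$ then yields $\DD\otimes\OO_M(a,b)\cong\OO_M(a+2,b-1)\oplus\OO_M(a-1,b+2)$, and taking global sections gives the decomposition $H^{0}(M,\DD\otimes\OO_M(a,b))=H^{0}(\OO_M(a+2,b-1))\,X_{\LL}\ \oplus\ H^{0}(\OO_M(a-1,b+2))\,X_{\VV}$. Thus every $X_{\FF}$ of bidegree $(a,b)$ can be written $X_{\FF}=F_{1}X_{\LL}+F_{2}X_{\VV}$ with $F_{1}\in H^{0}(\OO_M(a+2,b-1))$ and $F_{2}\in H^{0}(\OO_M(a-1,b+2))$. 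The first term is $R_{1}(a+2,b-1)(F_{1})\in A$ and the second is $R_{2}(a-1,b+2)(F_{2})\in B$, which is exactly the asserted linear combination.

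The one point that needs genuine care is the global transversality of $T\LL$ and $T\VV$, since it is what upgrades the short exact sequence into a direct sum; I expect this to be the crux of the argument. An alternative that stays closer to the cohomological style of Proposition (\ref{dimension-del-espacio-de-ecuaciones-diferenciales}) is to tensor (\ref{distribucion-de-contacto-rectas-y-vertical}) by $\OO_M(a,b)$, take the associated long exact sequence, and invoke Proposition (\ref{cohomologia-de-O_{M}(a,b)}) (applicable since $b\geq 1$ forces $b-1\geq 0$) to obtain $H^{1}(\OO_M(a+2,b-1))=0$. The sequence of global sections is then short exact, with the cone over $A$ its kernel, while $X_{\VV}$ supplies a section of the surjection onto $H^{0}(\OO_M(a-1,b+2))$, recovering the same splitting. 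As a consistency check, a dimension count from Propositions (\ref{cohomologia-de-O_{M}(a,b)}) and (\ref{dimension-del-espacio-de-ecuaciones-diferenciales}) confirms $h^{0}(\OO_M(a+2,b-1))+h^{0}(\OO_M(a-1,b+2))=h^{0}(\DD\otimes\OO_M(a,b))$, as the decomposition demands.
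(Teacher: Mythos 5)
Your proof is correct, and it takes a more structural route than the paper. The paper's own argument is a one--line dimension count: from the exact sequence $0 \to \OO_{M}(a+2,b-1) \to \DD\otimes\OO_{M}(a,b) \to \OO_{M}(a-1,b+2) \to 0$ obtained by twisting (\ref{distribucion-de-contacto-rectas-y-vertical}), together with Propositions (\ref{cohomologia-de-O_{M}(a,b)}) and (\ref{dimension-del-espacio-de-ecuaciones-diferenciales}), the author concludes ``after counting dimensions'' that $\EE(a,b)=Join(A,B)$. You instead observe that the sequence (\ref{distribucion-de-contacto-rectas-y-vertical}) splits globally because $T\LL=\ker d\check{\pi}|_{\DD}$ and $T\VV=\ker d\pi|_{\DD}$ are everywhere transverse line subbundles of $\DD$, giving $\DD\cong\OO_{M}(2,-1)\oplus\OO_{M}(-1,2)$ and hence a direct--sum decomposition of $H^{0}(M,\DD\otimes\OO_{M}(a,b))$. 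This buys something the paper's phrasing quietly needs: a bare count of $\dim A+\dim B$ only fills out $\EE(a,b)$ if the two cones meet only at the origin, and your splitting (or, equivalently, the pointwise independence of $X_{\LL}$ and $X_{\VV}$) supplies exactly that, and in addition gives uniqueness of the decomposition $X=F_{1}X_{\LL}+F_{2}X_{\VV}$. Your second, cohomological variant (surjectivity of $H^{0}(\DD\otimes\OO_{M}(a,b))\to H^{0}(\OO_{M}(a-1,b+2))$ from $H^{1}(\OO_{M}(a+2,b-1))=0$, plus the section furnished by $X_{\VV}$) is essentially the paper's argument made precise. The only point to tighten is the global transversality itself: you verify it in the chart $(x,y,p)$ only; the clean invariant statement is that $\ker d\pi\cap\ker d\check{\pi}=0$ on $M\subseteq\PP\times\Pd$, which holds everywhere since $\pi\times\check{\pi}$ is an embedding, and both kernels lie in $\DD$.
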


\begin{remark}
Since we have an identification between $\Pp(H^{0}(M, \OO_{M}(d,k)))$ and the space of $k$-webs of degree $d$ in $\PP$ (see \cite{Jos}), then in the particular case when $a$ (respectively $b$) is equal to $1$ one has that $B$ (respectively $A$) can be identified with the space of $(b+2)$-webs of degree $0$ (respectively curves of degree $a+2$). 
\end{remark}

Let us consider, for an element $\FF$ in $\EE(a,b)$ which is not in A or B, the following divisors in $M$
\begin{eqnarray*}
tang(\FF, \LL)=(a-1)h+(b+2)\hh\\
tang(\FF, \VV)=(a+2)h+(b-1)\hh.
\end{eqnarray*}

Therefore if $a=1$ we obtain
$$
tang(\FF, \LL)=(b+2)\hh
$$
which corresponds to a $(b+2)$-web $\WW$ of degree $0$  in $\PP$. The lifting of each leaf of $\WW$ is a leaf of $\LL$ (because is a line) and since is in the tangency divisor, it is also a leaf of $\FF$. So $\FF$ has a one-parameter family of lines which are solutions. Observe also that  
\begin{eqnarray*}
A= Im R_{1}(3,b-1)\\
B= Im R_{2}(0,b+2)
\end{eqnarray*}
so, the assertion is true for all the elements of $\EE(1,b)$.\\

Consider now the case $a=2$. In this case
$$
tang(\FF, \LL)=h+(b+2)\hh
$$
corresponds to a $(b+2)$-web $\WW$ of degree 1 in $\PP$. Since a generic $(b+2)$-web of degree 1 has $(b+2)^{2}+(b+2)+1$ invariant lines (see section (\ref{seccion-sobre-jouanolou-para-webs})), and the lifting of this lines are $\LL$-invariant, we would like to say the same for the generic element of $\EE(2,b)$ (note again that the assertion is true for the elements of $A$ and $B$). Then we need the following lemma.

\begin{lemma}
The following maps
\begin{eqnarray*}
T_{1}: H^{0}(M,\DD\otimes \OO_{M}(a,b))\rightarrow H^{0}(M, O_{M}(a-1,b+2)),& T_{1}(X)=X\wedge X_{\LL}\\ 
T_{2}: H^{0}(M,\DD\otimes \OO_{M}(a,b))\rightarrow H^{0}(M, O_{M}(a+2,b-1)),& T_{2}(X)=X\wedge X_{\VV} 
\end{eqnarray*}
are surjective.
\end{lemma}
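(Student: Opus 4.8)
The plan is to recognize $T_1$ and $T_2$ as the maps induced on global sections by the two natural surjections attached to the exact sequence (\ref{distribucion-de-contacto-rectas-y-vertical}), and then to extract surjectivity from a cohomology vanishing supplied by Proposition \ref{cohomologia-de-O_{M}(a,b)}.

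First I would tensor the exact sequence (\ref{distribucion-de-contacto-rectas-y-vertical}) by $\OO_M(a,b)$. Since $T^*\LL=\OO_M(-2,1)$ and $T^*\VV=\OO_M(1,-2)$ yield $T\LL=\OO_M(2,-1)$ and $T\VV=\OO_M(-1,2)$, this reads
\[
0 \to \OO_M(a+2,b-1) \to \DD\otimes\OO_M(a,b) \to \OO_M(a-1,b+2) \to 0,
\]
which is exactly the sequence used in the proof of Proposition \ref{dimension-del-espacio-de-ecuaciones-diferenciales}. There is also the transposed sequence, coming from the (nowhere-vanishing) inclusion $X_\VV\colon T\VV\hookrightarrow\DD$,
\[
0 \to \OO_M(a-1,b+2) \to \DD\otimes\OO_M(a,b) \to \OO_M(a+2,b-1) \to 0.
\]

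Second I would identify the wedge maps with the quotient morphisms of these two sequences. The section $X_\LL$ is the inclusion $T\LL\hookrightarrow\DD$, and because $\LL$ is the nonsingular fibration $\check\pi$ it vanishes nowhere; hence $v\wedge X_\LL=0$ precisely when $v$ is a multiple of $X_\LL$. Thus $v\mapsto v\wedge X_\LL$ defines a morphism $\DD\to\det(\DD)\otimes T^*\LL$ with kernel the subbundle $T\LL$, inducing an isomorphism $T\VV=\DD/T\LL\xrightarrow{\sim}\det(\DD)\otimes T^*\LL$; by Lemma \ref{fibrado-canonico-de-M-fibrado-normal-y-determinante-de-D} both sides equal $\OO_M(-1,2)$. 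Twisting by $\OO_M(a,b)$ shows that $T_1$ is $H^0$ of the first surjection above, and the same computation with $X_\VV$ shows that $T_2$ is $H^0$ of the second. Passing to the long exact sequence in cohomology, $T_1$ is surjective provided $H^1(\OO_M(a+2,b-1))=0$ and $T_2$ is surjective provided $H^1(\OO_M(a-1,b+2))=0$. Since $a,b\ge1$, both bidegrees $(a+2,b-1)$ and $(a-1,b+2)$ have nonnegative entries, so these groups vanish by Proposition \ref{cohomologia-de-O_{M}(a,b)}, which completes the argument.

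The only point requiring care is the second step: verifying that wedging with $X_\LL$ (resp. $X_\VV$) genuinely reproduces the quotient map of (\ref{distribucion-de-contacto-rectas-y-vertical}), and keeping track of the twist by $\det(\DD)=\OO_M(1,1)$, so that the kernel of the relevant surjection is the line bundle whose first cohomology is controlled by Proposition \ref{cohomologia-de-O_{M}(a,b)}. Once this identification is in place, surjectivity reduces to the standard fact that $H^0$ of a surjection of sheaves is onto as soon as the $H^1$ of the kernel vanishes.
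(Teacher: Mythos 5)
Your argument is correct and is essentially the paper's own proof in a different packaging: both rest on the exact sequence $0\to\OO_{M}(a+2,b-1)\to\DD\otimes\OO_{M}(a,b)\to\OO_{M}(a-1,b+2)\to 0$ obtained from (\ref{distribucion-de-contacto-rectas-y-vertical}), the identification of $\ker(T_{1})$ with the sections $F X_{\LL}$, $F\in H^{0}(\OO_{M}(a+2,b-1))$, and the vanishing of the relevant $H^{1}$ from Proposition \ref{cohomologia-de-O_{M}(a,b)}. The paper phrases the conclusion as a dimension count via Proposition \ref{dimension-del-espacio-de-ecuaciones-diferenciales}, whereas you read surjectivity off the long exact cohomology sequence directly; the underlying content is the same.
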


\begin{proof}
We do the proof for $T_{1}$. Observe first that 
$$
ker(T_{1})=\{F X_{\LL}: F \in H^{0}(M,\OO_{M}(a+2,b-1))\}.
$$
Therefore, using propositions (\ref{cohomologia-de-O_{M}(a,b)}) and (\ref{dimension-del-espacio-de-ecuaciones-diferenciales}) we obtain
$$
dim(Im T_{1})=dim H^{0}(M, O_{M}(a-1,b+2))
$$
and this conclude the proof. 
\end{proof}

Using this lemma one has that the generic element of $\EE(a,b)$ has $(b+2)^{2}+(b+2)+1$ invariant curves which are lifts of lines on $\PP$. We can do the same analysis with the divisor $tang(\FF, \VV)$ in the cases when $b$ is equal to 1 or 2 and we get:

\begin{proposition}
The following assertions hold true when $a,b \geq 1$.
\begin{enumerate}
\item Every element of $\EE(1,b)$ has a one parameter family of invariant curves which are lifts of lines on $\PP$ and the generic element of $\EE(2,b)$ has $(b+2)^{2}+(b+2)+1$ invariant curves which are lifts of lines on $\PP$.
\item Every element of $\EE(a,1)$ has a one parameter family of invariant curves which are lifts of lines on $\Pd$ (that is, fibers of $\pi$) and the generic element of $\EE(a,2)$ has $(a+2)^{2}+(a+2)+1$ invariant curves which are lifts of lines on $\Pd$.
\end{enumerate}
\end{proposition}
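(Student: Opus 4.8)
The plan is to read each assertion directly off the tangency divisor $tang(\FF,\cdot)$ supplied by Proposition \ref{tangencia-entre-dos-ecuaciones-diferenciales}, to translate ``invariant line'' into ``component of a tangency divisor'', and to transport everything between the two fibrations $\pi$ and $\check{\pi}$ by the symmetry exchanging them.

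First I would treat part (1). For $\FF\in\EE(1,b)$ write $T^{*}\FF=\OO_{M}(1,b)$; combining $T^{*}\LL=\OO_{M}(-2,1)$, $\det(\DD)=\OO_{M}(1,1)$ with Proposition \ref{tangencia-entre-dos-ecuaciones-diferenciales} gives
\[
tang(\FF,\LL)=c_{1}\!\left(\det(\DD)\otimes T^{*}\FF\otimes T^{*}\LL\right)=(b+2)\hh .
\]
Since $\OO_{M}(0,b+2)=\check{\pi}^{*}\OO_{\Pd}(b+2)$ and $\check{\pi}_{*}\OO_{M}=\OO_{\Pd}$, every effective divisor in this class is $\check{\pi}^{*}D$ for a curve $D\subseteq\Pd$ of degree $b+2$; thus the tangency divisor is the union of the fibres $\check{\pi}^{-1}(\check{l})=\widetilde{l}$ over the one-parameter family $\{l:\check{l}\in D\}$ of lines of $\PP$. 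Each $\widetilde{l}$ is an $\LL$-leaf, and as it lies in $\{X_{\FF}\wedge X_{\LL}=0\}$ the field $X_{\FF}$ is proportional to $X_{\LL}$ along it; since $X_{\LL}$ is tangent to $\widetilde{l}$, so is $X_{\FF}$, so $\widetilde{l}$ is $\FF$-invariant. Because $\FF\neq\LL$ the section $X_{\FF}\wedge X_{\LL}$ is not identically zero and $\deg D=b+2\geq3$, so this produces, for every element of $\EE(1,b)$, the asserted one-parameter family.

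For the generic element of $\EE(2,b)$ the same computation yields $tang(\FF,\LL)=h+(b+2)\hh$, a $(b+2)$-web $\WW$ of degree $1$. Here I would use the surjectivity of $T_{1}$: since $T_{1}(X_{\FF})=X_{\FF}\wedge X_{\LL}$ realizes the tangency divisor, surjectivity makes the map $\FF\mapsto tang(\FF,\LL)$ dominant onto $\Pp H^{0}(M,\OO_{M}(1,b+2))$, whence the generic $\FF$ has a generic such web as its tangency divisor. By the count recalled in Section \ref{seccion-sobre-jouanolou-para-webs}, a generic $(b+2)$-web of degree $1$ has $(b+2)^{2}+(b+2)+1$ invariant lines; for each such line $l$ the lift $\widetilde{l}$ lies in $\WW=\{X_{\FF}\wedge X_{\LL}=0\}$, so the tangency argument above makes $\widetilde{l}$ an $\FF$-invariant curve, giving the stated number.

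Finally I would obtain part (2) by transporting the whole discussion under the duality exchanging the two projections, i.e. $h\leftrightarrow\hh$, $\pi\leftrightarrow\check{\pi}$, $\LL\leftrightarrow\VV$, $a\leftrightarrow b$: one has $tang(\FF,\VV)=(a+2)h+(b-1)\hh$, the lifts of lines of $\Pd$ are precisely the fibres of $\pi$ (the $\VV$-leaves), and one invokes the surjectivity of $T_{2}$ in place of $T_{1}$. Rerunning the two previous paragraphs under this dictionary gives the one-parameter family for every $\FF\in\EE(a,1)$ and the count $(a+2)^{2}+(a+2)+1$ for the generic element of $\EE(a,2)$. The step I expect to require the most care is the passage to the \emph{generic} element: one must verify that surjectivity of $T_{1}$ (resp. $T_{2}$) indeed makes $\FF\mapsto tang(\FF,\LL)$ dominant, so that the open condition ``the web carries the maximal number of invariant lines'' pulls back to a nonempty open subset of $\EE(2,b)$ (resp. $\EE(a,2)$); the clean identification of an invariant line of the web with a component $\widetilde{l}$ of the tangency divisor is the other point that must be stated precisely.
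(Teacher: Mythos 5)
Your argument is correct and is essentially the paper's own proof: compute $tang(\FF,\LL)$ (resp. $tang(\FF,\VV)$), identify it with a $(b+2)$-web of degree $0$ or $1$ on $\PP$ via the class of its surface, invoke the classification of degree-$0$ webs and the count of invariant lines for generic degree-$1$ webs, and use surjectivity of $T_{1}$ (resp. $T_{2}$) to propagate genericity. The one imprecision is your claim that $\FF\neq\LL$ forces $X_{\FF}\wedge X_{\LL}\not\equiv 0$: the section vanishes identically exactly when $\FF\in A=\mathrm{Im}\,R_{1}$, and the paper disposes of the sets $A$ and $B$ separately (for such elements the saturated foliation is $\LL$ or $\VV$, so the stated invariant curves exist trivially), which you should do as well.
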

For the case when $a \geq 3$ we have our main result.

\begin{theorem}\label{jouanolou-para-ecuaciones-diferenciales}
A generic second order differential equation of bidegree $(a,b)$ with $a \geq 3$ has no invariant algebraic curves which are lifts of curves on $\PP$. Moreover, when $a, b \geq 3$, the generic second order differential equation of bidegree $(a,b)$ does not admit any algebraic solution. 
\end{theorem}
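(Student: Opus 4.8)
The plan is to follow the strategy announced in the introduction: show that the set of equations of bidegree $(a,b)$ admitting an algebraic solution is a countable union of \emph{proper} closed subsets of $\EE(a,b)$, and then use irreducibility of $\EE(a,b)$ to conclude that a very generic equation lies in the complement. First I would reduce everything to invariant curves on $M$. An algebraic solution is the $\pi$-image of an algebraic leaf of $\FF_{sat}$, i.e. of an $\FF$-invariant algebraic curve in $M$; since $\FF$ is tangent to $\DD$, every such curve is tangent to $\DD$, so by the classification of $\DD$-tangent curves in Section 2 it is either the lift $\widetilde{C}$ of a curve $C\subseteq\PP$ or a fiber of $\pi$. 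For each $d\ge 1$ I form the incidence variety $I_d=\{(\FF,C):\deg C=d,\ \widetilde{C}\subseteq\mathrm{Inv}(\FF)\}\subseteq\EE(a,b)\times|\OO_{\PP}(d)|$; invariance is a closed condition and $|\OO_{\PP}(d)|$ is proper, so its image $Z_d\subseteq\EE(a,b)$ is closed, and likewise the locus $Z_{\pi}$ of equations with an invariant fiber of $\pi$ is closed. Thus the equations admitting an algebraic solution form $\big(\bigcup_{d\ge1}Z_d\big)\cup Z_{\pi}$, and it suffices to prove each piece is proper.

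For the line case ($d=1$), which is the one genuinely requiring the web theorem, I would use the tangency web $\WW_{\FF}:=\{X_{\FF}\wedge X_{\LL}=0\}=tang(\FF,\LL)$. By Proposition \ref{tangencia-entre-dos-ecuaciones-diferenciales} this is a divisor of class $(a-1)h+(b+2)\hh$, that is, a $(b+2)$-web of degree $a-1$ on $\PP$. The key equivalence is that $\widetilde{\ell}$ is $\FF$-invariant iff $\widetilde{\ell}\subseteq tang(\FF,\LL)$: since $\widetilde{\ell}$ is an $\LL$-leaf, the $\FF$-direction is tangent to $\widetilde{\ell}$ exactly where it coincides with the $\LL$-direction, so this says precisely that $\ell$ is an invariant line of $\WW_{\FF}$. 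As the assignment $\FF\mapsto\WW_{\FF}$ is the map $T_{1}(X_{\FF})=X_{\FF}\wedge X_{\LL}$, which is surjective, $\WW_{\FF}$ ranges over all $(b+2)$-webs of degree $a-1$. Since $a\ge3$ forces degree $a-1\ge2$, the web analogue of Jouanolou's theorem (Section \ref{seccion-sobre-jouanolou-para-webs}) shows the generic such web has no invariant line; pulling this proper locus back along the dominant map $T_{1}$ gives $Z_{1}\subsetneq\EE(a,b)$.

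For $d\ge2$ I would argue by a dimension count. Fixing a smooth $C$ of degree $d$, one has $\widetilde{C}\cdot h=d$ and $\widetilde{C}\cdot\hh=d(d-1)$, and invariance of $\widetilde{C}$ is the vanishing of the section $\sigma\in H^{0}(\widetilde{C},\NN_{\widetilde{C}}\otimes\OO_{M}(a,b)|_{\widetilde{C}})$ of Proposition \ref{tangencia-entre-ecuacion-diferencial-y-curva-no-invariante}; hence the fiber of $I_d$ over $C$ has codimension in $\EE(a,b)$ equal to the rank of the restriction $H^{0}(M,\DD\otimes\OO_{M}(a,b))\to H^{0}(\widetilde{C},\NN_{\widetilde{C}}\otimes\OO_{M}(a,b)|_{\widetilde{C}})$. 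The degree of the target bundle grows like $(b+2)d^{2}$, which dominates $\dim|\OO_{\PP}(d)|=\tfrac{d(d+3)}{2}$; so once one knows this restriction has rank exceeding $\dim|\OO_{\PP}(d)|$ for the generic $C$, one gets $\dim I_d<\dim\EE(a,b)$ and $Z_d\subsetneq\EE(a,b)$. The dual picture, obtained by exchanging $(\PP,\pi,\LL)$ with $(\Pd,\check{\pi},\VV)$, disposes of $Z_{\pi}$: the web $tang(\FF,\VV)$ is an $(a+2)$-web of degree $b-1$ on $\Pd$, the map $T_{2}$ is surjective, and $b\ge3$ makes its degree at least $2$, so by the same web argument the generic equation has no invariant fiber of $\pi$. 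Assembling the pieces, for $a\ge3$ a very generic equation has no invariant lift of a curve on $\PP$ (the first assertion), and for $a,b\ge3$ it has in addition no invariant fiber of $\pi$, hence no invariant algebraic curve at all, i.e. no algebraic solution.

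The \textbf{main obstacle} is the higher-degree count: one must prove that the restriction map above has rank strictly larger than $\dim|\OO_{\PP}(d)|$ for the generic curve of each degree $d\ge2$, and control singular or reducible $C$ (which is why organizing $Z_d$ by degree and taking closures matters). This is a positivity statement for $\DD\otimes\OO_{M}(a,b)$ restricted to lifted curves, and it is here that $a\ge3$ (and $b\ge3$ on the dual side) must enter quantitatively. By contrast, the line case is exactly where such a count is inconclusive, since it cannot exclude finitely many invariant lines; that gap is precisely what forces the appeal to the web theorem.
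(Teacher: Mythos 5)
Your global architecture (write the bad locus as a countable union of closed sets, prove each is proper, use irreducibility) is the paper's, and your treatment of lines and of fibers of $\pi$ via the tangency webs $tang(\FF,\LL)$ and $tang(\FF,\VV)$ together with the surjectivity of $T_{1}$, $T_{2}$ is correct and self-contained. But the argument for $Z_{d}$ with $d\geq 2$ is a genuine gap, and you have identified it yourself: everything reduces to the claim that the composite $H^{0}(M,\DD\otimes\OO_{M}(a,b))\to H^{0}(\widetilde{C},\mathcal{N}_{\widetilde{C}}\otimes T^{*}\FF|_{\widetilde{C}})$ has rank strictly larger than $\dim|\OO_{\PP}(d)|$ for every $d\geq 2$ and for every (possibly singular, reducible, or non-reduced) curve $C$, and this is nowhere proved. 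Knowing that the degree of the target line bundle grows like $(b+2)d^{2}$ bounds $h^{0}$ of the target, not the rank of the restriction from $M$; to control that rank you would need surjectivity statements (vanishing of $H^{1}$ of twisted ideal sheaves of $\widetilde{C}$ in $M$) that degrade precisely for the singular and reducible curves you would have to include when taking closures. As written, the proof establishes only the absence of invariant lifted lines and invariant fibers, not the full statement.

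The paper avoids this entirely by never doing a degree-by-degree count. It proves that for each Hilbert polynomial $\chi$ the locus $\ZZ_{\chi}$ of equations with a $\DD$-tangent invariant subscheme of polynomial $\chi$ is closed (via the Hilbert scheme and the closedness of $H(\DD)$), and then exhibits a \emph{single} explicit element lying outside all $\ZZ_{\chi}$ with $\chi\neq\chi_{0}$ at once: the degenerate equation $F\cdot X_{\VV}$, where $F$ is the section of $\OO_{M}(a-1,b+2)$ cutting out $S_{\WW}$ for a $(b+2)$-web $\WW$ of degree $a-1\geq 2$ with no algebraic invariant curves (Theorem \ref{jouanolou-para-webs}). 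Its only $\DD$-tangent invariant curves are fibers of $\pi$ and lifts of leaves of $\WW$, and the latter are never algebraic; this kills all degrees $d$ simultaneously. The second part is handled symmetrically with $F\cdot X_{\LL}$ built from a nonsingular $(b-1)$-web of degree $a+2$. If you want to salvage your outline, the cleanest fix is to replace your $d\geq 2$ dimension count by this construction of an explicit point in the complement of each $Z_{d}$; your $d=1$ and $Z_{\pi}$ arguments can stay, but they are then no longer needed separately.
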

For the proof we need an analogous result for $k$-webs in $\PP$ which we explain in the next section.
\section{About Webs with algebraic leaves on $\PP$}\label{seccion-sobre-jouanolou-para-webs}
\subsection{Global webs on $\PP$}
We are interested in webs defined in the projective plane $\PP$. Let $\WW=[\omega]\in \Pp H^{0}(\PP,\Sym^{k}\Omega^{1}_{\PP} \otimes \mathcal{N})$ be a $k$-web on $\PP$. Analogously to the case of foliations, we define the degree of $\WW$ as the number of tangencies, counted with multiplicities, of $\WW$ with a line not everywhere tangent to $\WW$, and we denote it by $deg(\WW)$. More precisely, if we have $i:\Pp^{1}\hookrightarrow \PP$ a line on $\PP$ then the image of $i$ is everywhere tangent to $\WW$ if and only if $i^{*}\omega$ vanishes identically. When this line is not invariant by $\WW$ the points of tangency with $\WW$ correspond to the zeroes of $[i^{*}\omega]\in \Pp H^{0}(\Pp^{1}, \Sym^{k} \Omega^{1}_{\Pp^{1}}\otimes i^{*}\mathcal{N})$. Observing that $i^{*}\NW = i^{*}\OO_{\PP}(deg(\NW))=\OO_{\Pp^{1}}(deg(\NW))$ and $\Sym^{k} \Omega^{1}_{\Pp^{1}}=\OO_{\Pp^{1}}(-2k)$ we conclude that
$$
deg(\NW)=d+2k.
$$
From now we denote by $\Ww(k,d)=\Pp H^{0}(\PP,\Sym^{k}\Omega^{1}_{\PP}(d+2k))$ the space of $k$-webs of degree $d$ in $\PP$.\\
 
Let $\WW$ be a $k-$web of degree $d$ in $\PP$. For a point $z\in \PP$ we have $k$ points (not necessarily different) $p_{1}(z),\ldots, p_{k}(z) \in \mathbb{P}(T_{p}\PP)$ corresponding to the directions of $\WW$ at this point. In this way we obtain a surface $S_{\WW}\subseteq M$ which is the union of the lifts of the leaves of $\WW$. It is clear that $S_{\WW}$ intersects a generic fiber of $\pi:M\rightarrow \PP$ in $k$ points (counting with multiplicities), so if we write $[S_{\WW}]=ah+b\hh$ we have: $k=[S_{\WW}].h^{2}=b$. On the other hand we know that the class of the lift of a line $l\subseteq \PP$ is $[\check{l}]=\hh^{2}$, and the points of $S_{\WW} \cap \check{l}$ (for $l$ generic) corresponds to points of $\PP$ where $\WW$ and $l$ are tangent, so one conclude:  $d=[S_{\WW}].\hh^{2}=a$ and therefore:
\begin{equation}\label{levantamiento-de-una-k-web-de-grado-d}
[S_{\WW}]=dh+k\hh
\end{equation}

The restriction of $\DD$ to $S_{\WW}$ defines a foliation (over the regular part of $S_{\WW}$) whose leaves are the lifts of the leaves of $\WW$: In fact, the lift of $\WW$ defines a foliation in $S_{\WW}$ which is tangent to $\DD$.\\

\subsection{Invariant algebraic curves}

Let $\WW$ be a $k$-web of degree $d$ in $\PP$ defined by $\omega$ and let $C\subseteq \PP$ an irreducible algebraic curve. As in the case of lines we say that $C$ is $\WW$-invariant if $i^{*}\omega \equiv 0$ where $i$ is the inclusion of the smooth part of $C$ in $\mathbb{C}^{3}$.\\

Observe that we have an isomorphism between $\Pp H^{0}(\PP, \Sym^{k} T\PP \otimes \OO_{\PP}(d-k))$ and $\Ww(k,d)$ giving by the contraction with the volume form of $\PP$. Therefore the web $\WW$ is defined by an element $X_{\WW} \in \Pp H^{0}(\PP, \Sym^{k} T\PP \otimes \OO_{\PP}(d-k))$.\\ 
 
Let us assume that $C$ is given by the irreducible homogenous polynomial $F$ of degree $r$. Then $C$ is $\WW$-invariant if and only if there exist a homogenous polynomial $H$ of degree $d+k(r-1)-r$ such that
\begin{equation}\label{curva-invatiante}
X_{\WW}(F)= FH
\end{equation} 
where $X_{\WW}(F)$ is the application of $dF$ to $X_{\WW}$.  

\begin{remark}
The important fact of equation (\ref{curva-invatiante}) is that it still works for reducible curves, i.e. if the decomposition of the curve is $F=F_{1}^{n_{1}}. \ldots .F_{k}^{n_{k}}$, then the equation (\ref{curva-invatiante}) holds true if and only if each $F_{j}$ define a $\WW$-invariant curve.
\end{remark}

Consider now the following set:
$$
\mathcal{C}(r)=\{\WW \in \Ww (k,d)/ \exists\hspace{0.1cm} curve\hspace{0.1cm} of\hspace{0.1cm} degree\hspace{0.1cm} r \hspace{0.1cm} \WW-invariant\}.
$$
We have the following proposition.

\begin{proposition}\label{webs-with-curve-of-degree-r}
The set $\mathcal{C}(r)$ is an algebraic closed of $\Ww(k,d)$.
\end{proposition}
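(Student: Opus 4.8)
The plan is to exhibit $\mathcal{C}(r)$ as the image of an incidence variety under a projection and then invoke properness to conclude closedness. First I would form the incidence set
\[
I(r)=\{(\WW,[F])\in \Ww(k,d)\times \Pp H^{0}(\PP,\OO_{\PP}(r)): F \mid X_{\WW}(F)\},
\]
using the invariance criterion (\ref{curva-invatiante}): the pair lies in $I(r)$ exactly when there is a homogeneous $H$ of degree $e:=d+k(r-1)-r$ with $X_{\WW}(F)=FH$. The key point is that $I(r)$ is Zariski closed in the product of the two projective spaces. To see this, note that for fixed $(\WW,[F])$ the section $X_{\WW}(F)$ is a homogeneous polynomial of degree $d+k(r-1)$ whose coefficients are bilinear in the coefficients of $X_{\WW}$ and (polynomially, of degree $k$) in those of $F$. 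Membership in $I(r)$ is the condition that this polynomial lie in the ideal $(F)$ in the relevant graded piece, i.e. that $X_{\WW}(F)$ belong to the image of the multiplication map $H^{0}(\OO_{\PP}(e))\xrightarrow{\cdot F}H^{0}(\OO_{\PP}(d+k(r-1)))$. Equivalently, the class of $X_{\WW}(F)$ in the quotient $H^{0}(\OO_{\PP}(d+k(r-1)))/F\cdot H^{0}(\OO_{\PP}(e))$ must vanish.

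The step that needs care is making this vanishing condition into honestly algebraic equations, because the cokernel of multiplication by $F$ jumps in dimension as $[F]$ varies (for instance $F$ can acquire repeated or singular components). The clean way around this is divisibility by rank: $F$ divides $X_{\WW}(F)$ if and only if every $(N+1)\times(N+1)$ minor of a suitable coefficient matrix vanishes, where one encodes the map ``multiply the unknown $H$ by $F$ and subtract $X_{\WW}(F)$'' as a linear system in the coefficients of $H$ and imposes that $X_{\WW}(F)$ lie in the column span of the matrix of $\cdot F$. Concretely, the condition ``$X_{\WW}(F)\in \operatorname{Im}(\cdot F)$'' is that the rank of the matrix $M_{F}$ representing $\cdot F$ equals the rank of the augmented matrix $[\,M_{F}\mid X_{\WW}(F)\,]$; this rank equality is cut out by the simultaneous vanishing of certain maximal minors, which are polynomial in the entries, hence in the coefficients of $\WW$ and $F$. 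This exhibits $I(r)$ as a closed subvariety of $\Ww(k,d)\times\Pp H^{0}(\OO_{\PP}(r))$.

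Finally I would project. Let $p:\Ww(k,d)\times \Pp H^{0}(\OO_{\PP}(r))\to \Ww(k,d)$ be the first projection. By construction $\mathcal{C}(r)=p(I(r))$, since a web admits some invariant curve of degree $r$ precisely when the fiber of $I(r)$ over it is nonempty. Because $\Pp H^{0}(\OO_{\PP}(r))$ is a projective space, the projection $p$ is a proper (indeed closed) morphism, so the image of the closed set $I(r)$ is closed in $\Ww(k,d)$. This gives the proposition. The main obstacle is purely the second step — verifying that the divisibility relation $F\mid X_{\WW}(F)$ defines a genuinely closed (polynomially cut out) condition despite the non-flat behaviour of the multiplication cokernel; once this is phrased as a minor-rank condition, properness of projection from a projective space does the rest.
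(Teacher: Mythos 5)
Your proof is correct, and the overall skeleton (incidence variety plus properness of projection from a projective space) is the same as the paper's; the difference lies in how the incidence condition is shown to be closed. The paper avoids the divisibility question entirely by adding the cofactor as a third projective variable: it defines $\mathcal{Z}(r)\subseteq \Ww(k,d)\times \Pp(S_{r})\times \Pp(S_{d+k(r-1)-r})$ by the manifestly polynomial equations $X_{\WW}(F)-FH=0$, projects once to get the closed set $\Sigma(r)$ of invariant pairs $(\WW,[F])$, and then projects again onto $\Ww(k,d)$. You instead eliminate $H$ by linear algebra, cutting out $I(r)$ directly by the vanishing of the maximal minors of the augmented matrix $[\,M_{F}\mid X_{\WW}(F)\,]$. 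That works, but one remark on your ``step that needs care'': the cokernel of $\cdot F$ does \emph{not} jump as $[F]$ varies --- multiplication by a nonzero homogeneous form is injective on the polynomial ring, so $M_{F}$ has constant rank equal to $\dim S_{e}$ for every $[F]\in\Pp(S_{r})$. This constancy is not incidental; it is exactly what makes your reduction legitimate, since the condition ``$\operatorname{rank}[A\mid b]=\operatorname{rank}A$'' is \emph{not} closed when the rank of $A$ can drop (already $A=(t)$, $b=1$ gives the open set $t\neq 0$). So you should state explicitly that $\operatorname{rank}M_{F}=\dim S_{e}$ identically, after which ``all $(\dim S_{e}+1)$-minors of the augmented matrix vanish'' is an honest system of polynomial equations in the coefficients of $\WW$ and $F$. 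The paper's extra projective factor buys freedom from this subtlety at no cost (the added factor is projective, so the extra projection is still closed); your version buys a smaller ambient space and a completely explicit set of defining equations.
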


\begin{proof}
Denote by $S_{r}=H^{0}(\PP, \OO_{\PP}(r))$ and consider 
$$
\mathcal{Z}(r)\subseteq \Ww(k,d)\times \Pp(S_{r})\times \Pp(S_{d+k(r-1)-r})
$$ 
the subset defined by
$$
\mathcal{Z}(r)=\{(\WW, [F], [H])/ X_{\WW}(F) -FH=0\}.
$$
Observe that the natural map 
$$
\pi: \Ww(k,d)\times \Pp(S_{r})\times \Pp(S_{d+k(r-1)-r}) \rightarrow \Ww(k,d)\times \Pp(S_{r})
$$
takes $\mathcal{Z}(r)$ in the closed set $\Sigma(r)\subseteq \Ww(k,d)\times \Pp(S_{r}) $ formed by the pairs $(\WW, [F])$ such that the curve defined by $F$ is invariant by $\WW$. To conclude is enough to observe that $\mathcal{C}(r)$ is the image of $\Sigma(r)$ via the projection 
$$
\Ww(k,d)\times \Pp(S_{r})\rightarrow \Ww(k,d).
$$  
\end{proof}
\subsection{Webs of degree 0 and 1}

Given a projective curve $C\subseteq \PP$ of degree $k$ and a line $l_{0}\in \Pd$ transverse to $C$ there is a germ of $k$-web $\WW_{C}(l_{0})$ on $(\Pd, l_{0})$ defined by the submersions $p_{1}, \ldots, p_{k}: (\Pd, l_{0}) \rightarrow C$ which describe the intersections of $l \in (\Pd, l_{0})$ with $C$. This webs are called algebraic $k$-webs.\\

It is clear from the definition that the fiber of $p_{i}$ through a point $l \in (\Pd, l_{0})$ is contained in the set of lines that contain $p_{i}(l)$. Consequently the fibers of this submersion are contained in lines.\\

When $C$ is a reducible curve with irreducible components $C_{1}, \ldots, C_{m}$ then $\WW_{C}(l_{0})=\WW_{C_{1}}(l_{0})\boxtimes \ldots \boxtimes \WW_{C_{m}}(l_{0})$.\\

If no irreducible component of $C$ is a line then the leaves of $\WW_{C}(l_{0})$ trough $l$ are the hyperplanes passing through it and tangent to $\check{C}$ at some point $p\in \check{C}$.\\

Consider now the incidence variety $M \subseteq \PP \times \Pd$. Onde defines for every curve $C \subseteq \PP$ its dual web $\WW_{C}$ as the one defined by the surface $\pi^{-1}(C)$ seen as a multisection of $\check{\pi}: M \rightarrow \Pd $ (for more details see \cite{PiPer}, section 1.3). It is easy to verify that the germification of this global web at a generic point $l_{0}\in \Pd $ coincides with $\WW_{C}(l_{0})$ defined before. The following proposition can be found in \cite{PiPer}, proposition 1.4.2,  for the $n$ dimensional case. 

\begin{proposition}
If $C\subseteq \PP$ is a projective curve of degree $k$, then $\WW_{C}$ is a k-web of degree zero on $\Pd$. Reciprocally, if $\WW$ is a $k$-web of degree zero on $\Pd$ then there exists $C \subseteq \PP $ a projective curve of degree $k$ such that $\WW = \WW_{C}$. 
\end{proposition}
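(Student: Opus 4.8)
The plan is to prove both implications by passing to the lifting surface $S_{\WW}\subseteq M$ and reading off the bidegree of the associated web from the intersection numbers of Lemma \ref{fibrado-canonico-de-M-fibrado-normal-y-determinante-de-D}, exploiting the symmetry $\PP\leftrightarrow\Pd$, $h\leftrightarrow\hh$ of the incidence variety $M$ (the defining relations $h^{3}=0$ and $h^{2}-h\hh+\hh^{2}=0$ of $H^{*}(M)$ are symmetric in $h$ and $\hh$, and force $\hh^{3}=0$ as well). Under this symmetry formula (\ref{levantamiento-de-una-k-web-de-grado-d}) becomes, for a $k$-web of degree $d$ on $\Pd$, the identity $[S_{\WW}]=d\,\hh+k\,h$; consequently the order of such a web is $[S_{\WW}]\cdot\hh^{2}$ and its degree is $[S_{\WW}]\cdot h^{2}$, where $h^{2}$ is the class of a $\pi$-fibre and $\hh^{2}$ that of a $\check\pi$-fibre.

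For the direct implication, let $C\subseteq\PP$ have degree $k$, so $[C]=kh$ on $\PP$ and $[\pi^{-1}(C)]=\pi^{*}[C]=kh$ in $H^{*}(M)$. Since $[\pi^{-1}(C)]\cdot\hh^{2}=kh\cdot\hh^{2}=k>0$, the map $\check\pi|_{\pi^{-1}(C)}$ is dominant and generically $k$-to-one, so $\pi^{-1}(C)$ is indeed a $k$-multisection of $\check\pi$ and defines a $k$-web, namely $\WW_{C}$. Its degree is $[\pi^{-1}(C)]\cdot h^{2}=kh\cdot h^{2}=k\,h^{3}=0$, so $\WW_{C}$ is a $k$-web of degree zero. (Geometrically this is transparent: the leaves of $\WW_{C}$ are germs of dual lines $\check z$ with $z\in C$, and a generic line of $\Pd$ is tangent to no such line, since two distinct lines of a projective plane meet transversally.)

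For the converse, let $\WW$ be a $k$-web of degree zero on $\Pd$, with lifting surface $S_{\WW}$; by the displayed formula with $d=0$ we get $[S_{\WW}]=kh$. The degree hypothesis reads $[S_{\WW}]\cdot h^{2}=0$. As $h^{2}$ is the class of a generic $\pi$-fibre and such a fibre meets the surface $S_{\WW}$ in the expected dimension zero, the vanishing of this intersection number forces $S_{\WW}$ to be disjoint from the generic fibre of $\pi$. Hence $\pi|_{S_{\WW}}$ is not dominant and $C:=\pi(S_{\WW})$ is a curve in $\PP$. I would then check, on each irreducible component, that over a generic $z\in C$ the set $S_{\WW}\cap\pi^{-1}(z)$ is a nonempty closed subset of positive dimension in the fibre $\pi^{-1}(z)\cong\Pp^{1}$, hence is the whole fibre; taking closures gives $S_{\WW}=\pi^{-1}(C)$ as divisors. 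Comparing classes, $kh=[S_{\WW}]=\pi^{*}[C]$ forces $[C]=kh$, i.e. $\deg C=k$. Finally $\WW_{C}$ is by definition the web attached to the multisection $\pi^{-1}(C)=S_{\WW}$, and a web on $\Pd$ is determined by its lifting surface, so $\WW=\WW_{C}$.

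The main obstacle is the converse, specifically the identity $S_{\WW}=\pi^{-1}(C)$: the intersection number $[S_{\WW}]\cdot h^{2}=0$ yields directly only that $\pi|_{S_{\WW}}$ fails to be dominant, and one must still verify that $S_{\WW}$ fills out the entire $\pi$-fibre over a generic point of $C$ and carries the correct multiplicities, which requires a little care when $\WW$ (equivalently $S_{\WW}$) is reducible or non-reduced. Once $S_{\WW}=\pi^{-1}(C)$ is established the remaining identifications are formal.
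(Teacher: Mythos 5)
Your argument is correct in substance, but note that the paper does not actually prove this proposition: it only cites \cite{PiPer}, Proposition~1.4.2, where the statement is established (in arbitrary dimension) by a direct geometric argument --- degree zero means a generic line of $\Pd$ has no tangencies with $\WW$, which forces every leaf to be (a germ of) a line of $\Pd$, i.e.\ a piece of some $\check z$ with $z\in\PP$; the locus of such points $z$ is then shown to be an algebraic curve $C$ of degree $k$, and $\WW=\WW_C$. Your route is different and fits the toolkit of Section~2: you read everything off the class $[S_{\WW}]\in H^{2}(M)$, using that a $k$-web of degree $0$ on $\Pd$ has lifting surface of class $kh$, that $kh\cdot h^{2}=0$ forces each $\pi$-fibre to be either contained in or disjoint from $S_{\WW}$, and hence that $S_{\WW}=\pi^{*}D$ for an effective divisor $D$ of degree $k$ on $\PP$. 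This buys a proof entirely internal to the intersection calculus of $H^{*}(M)$ already set up in the paper, at the cost of two points you should make explicit: (i) the identification of $\Pp(T\Pd)$ with the incidence variety $M$ (biduality of the flag variety), which is what legitimises regarding the lifting surface of a web on $\Pd$ as a divisor in $M$ and swapping the roles of $h$ and $\hh$; and (ii) the multiplicity bookkeeping in the non-reduced/reducible case, which you correctly flag --- the conclusion is that $D$ is an effective divisor of degree $k$, possibly non-reduced (e.g.\ the $k$-th power of a pencil of lines corresponds to a $k$-fold line), which is consistent with how the paper uses the proposition.
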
                                                                                                                                                                    
Therefore one has the description of the webs of degree zero as one-parameter families of lines in the plane.\\

Given now a $k$-web $\WW$ of degree 1 on $\PP$, one can consider its lift $S_{\WW}$ to $M$. Observe first that the projection $\check{\pi}|_{S_{\WW}}: S_{\WW} \rightarrow \Pd$ is dominant and has degree one: a generic fiber of $\check{\pi}: M \rightarrow \Pd$ which is the lift of a line in $\PP$ intersects $S_{\WW}$ in the point corresponding to the unique tangency between this line and $\WW$. Then one obtains a foliation $\FF_{\WW}$ on $\Pd$. A similar argument shows that this foliation has degree $k$. Observe that since $\WW$ could be the product of a web of degree zero with a web of degree 1, $\FF_{\WW}$ could have a codimension 1 singular set.\\

Reciprocally, if we begin with a foliation of degree $k$ $\FF$ in $\Pd$, we obtain in the way a $k$-web of degree one $\WW_{\FF}$ in $\PP$. Moreover, as the reader can verify in \cite{PiPer}, theorem 1.4.8, we have $\WW_{\FF_{\WW}}=\WW$, so this correspondence is in fact an isomorphism between $\Ww(k,1)$ and the space of foliations of degree $k$ in $\Pd$. \\

Assume now that we have a foliation $\FF$ in $\Pd$ with a non-degenerate singularity at $l \in \Pd$. Then the fiber of $\check{\pi}$ over $l$ is contained in $S_{\FF}$. Since this fiber corresponds to the lift of the line that $l$ represents we conclude that $l$ is $\WW_{\FF}$-invariant. Using the fact that a generic foliation of degree $k$ has $k^{2}+k+1$ non-degenerate singularities, one obtains the following proposition.

\begin{proposition}
A generic $k$-web of degree one has $k^{2}+k+1$ invariant lines.
\end{proposition}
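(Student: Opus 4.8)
The plan is to transport the statement to the dual plane $\Pd$ via the isomorphism $\Ww(k,1)\cong\{\text{foliations of degree }k\text{ on }\Pd\}$, $\WW\mapsto\FF_{\WW}$, established just above. Under this isomorphism a generic $\WW\in\Ww(k,1)$ is of the form $\WW_{\FF}$ for a generic foliation $\FF$ of degree $k$ on $\Pd$, and $\FF_{\WW_{\FF}}=\FF$, so it suffices to count the $\WW_{\FF}$-invariant lines of $\PP$ in terms of the foliation $\FF$.

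First I would recall the classical fact underlying the singularity count. A foliation of degree $k$ on $\Pd$ is given by a twisted vector field, a global section of $T\Pd\otimes\OO_{\Pd}(k-1)$, whose singular scheme has length $c_{2}(T\Pd\otimes\OO_{\Pd}(k-1))=3+3(k-1)+(k-1)^{2}=k^{2}+k+1$. For a generic $\FF$ this scheme is reduced and every singularity is non-degenerate, giving exactly $k^{2}+k+1$ distinct non-degenerate singular points. The paragraph preceding the proposition already shows that each non-degenerate singularity $l\in\Pd$ produces a $\WW_{\FF}$-invariant line, namely the line $\ell\subseteq\PP$ that the point $l$ represents; since the incidence correspondence $l\leftrightarrow\ell$ between points of $\Pd$ and lines of $\PP$ is a bijection, distinct singularities yield distinct invariant lines. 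This already produces at least $k^{2}+k+1$ invariant lines.

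To obtain exactly this number I would establish the converse: if $\ell\subseteq\PP$ is a $\WW_{\FF}$-invariant line and $l\in\Pd$ is the corresponding point, then $l$ must be a singularity of $\FF$. The key observation is that invariance of $\ell$ means its lift $\check{\ell}=\check{\pi}^{-1}(l)$ is simultaneously a leaf of the contact foliation $\DD|_{S_{\WW}}$ and an entire fiber of $\check{\pi}$; hence $\check{\ell}$ is contracted by the degree-one map $\check{\pi}|_{S_{\WW}}\colon S_{\WW}\to\Pd$. A leaf of a foliation contracted to a point by a birational morphism forces the push-forward foliation $\FF=\check{\pi}_{*}(\DD|_{S_{\WW}})$ to be singular at the image point $l$. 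Combining the two directions yields a bijection between $\WW_{\FF}$-invariant lines and singularities of $\FF$, and the count $k^{2}+k+1$ follows.

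The step I expect to be the main obstacle is this converse (exactness) direction: controlling the birational geometry of $\check{\pi}|_{S_{\WW}}$ carefully enough to rule out invariant lines arising over regular points of $\FF$. One must know that for generic $\FF$ the surface $S_{\WW}$ is well-behaved along the contracted fibers and that the only fibers of $\check{\pi}$ contained in $S_{\WW}$ sit over the singular locus of $\FF$. Genericity should guarantee that every singularity is non-degenerate and that the contraction phenomenon occurs precisely there, but making the ``contracted leaf forces a singularity'' implication rigorous is the delicate point; by contrast the forward direction and the Chern-class computation of the singular scheme are routine.
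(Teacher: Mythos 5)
Your proposal follows essentially the same route as the paper: pass to the dual plane via $\Ww(k,1)\cong\{\text{degree-}k\text{ foliations on }\Pd\}$, note that a generic such foliation has $k^{2}+k+1$ non-degenerate singularities, and observe that each one yields a $\WW_{\FF}$-invariant line because the corresponding fiber of $\check{\pi}$ lies in $S_{\WW}$. The converse direction you flag as delicate (every invariant line sits over a singularity) is not addressed in the paper at all --- its argument, as written, only establishes the lower bound --- so your sketch of the contraction argument is, if anything, more complete than the source.
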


Using proposition (\ref{webs-with-curve-of-degree-r}) we conclude that every $k$-web of degree 1 has at least one invariant line. Observe that this webs could also have an infinite number of invariant lines, for example in the case of the product of webs of degree one with webs of degree zero.
\subsection{Webs of degree greater than 2}

For webs of higher degrees, we have the following theorem.
\begin{theorem}\label{jouanolou-para-webs}
A generic $k$-web of degree $d$ in $\PP$ does not admit any invariant algebraic curve if $d \geq 2$. 
\end{theorem}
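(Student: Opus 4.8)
The plan is to show that the set of $k$-webs of degree $d$ admitting \emph{some} invariant algebraic curve is a countable union of proper closed subvarieties of $\Ww(k,d)$, so that its complement is very generic. A web $\WW$ has an invariant algebraic curve if and only if $\WW\in\bigcup_{r\geq 1}\mathcal{C}(r)$, where $\mathcal{C}(r)$ is the locus of webs carrying an invariant curve of degree $r$. By Proposition \ref{webs-with-curve-of-degree-r} each $\mathcal{C}(r)$ is closed, and $\Ww(k,d)$ is an irreducible projective space, hence a Baire space in the analytic topology; thus it is enough to prove that $\mathcal{C}(r)\subsetneq\Ww(k,d)$ for every $r\geq 1$. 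The remark following equation (\ref{curva-invatiante}) lets me restrict to irreducible $C$, since an invariant reducible curve forces each of its components to be invariant.

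To prove properness I would estimate the dimension of the incidence variety $\Sigma(r)\subseteq\Ww(k,d)\times\Pp(S_{r})$ from the proof of Proposition \ref{webs-with-curve-of-degree-r}, whose image in $\Ww(k,d)$ is exactly $\mathcal{C}(r)$. The key structural fact is that for a fixed $[F]\in\Pp(S_r)$ the equation $X_{\WW}(F)=FH$ is \emph{linear} in $\WW$ (with $H$ then determined), so the fibre of $\Sigma(r)\to\Pp(S_r)$ over $[F]$ is the projectivization of the kernel of the linear map
$$
\rho_F:\; H^{0}(\PP,\Sym^{k}T\PP\otimes\OO_{\PP}(d-k))\longrightarrow H^{0}(C,\OO_{C}(d+k(r-1))),\qquad X_{\WW}\longmapsto X_{\WW}(F)|_{C}.
$$
Hence $\dim\mathcal{C}(r)\leq\dim\Sigma(r)=\dim\Pp(S_r)+\bigl(\dim\Ww(k,d)-\operatorname{rk}\rho_F\bigr)$ for generic $F$, and $\mathcal{C}(r)$ is proper as soon as $\operatorname{rk}\rho_F>\dim\Pp(S_r)=\binom{r+2}{2}-1$. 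For a smooth $C$ of degree $r$, Riemann--Roch on $C$ gives $h^{0}(C,\OO_C(d+k(r-1)))=rd+kr(r-1)-\binom{r-1}{2}+\cdots$, with leading term $(k-\tfrac12)r^{2}$, which is to be compared with the leading term $\tfrac12 r^{2}$ of $\dim\Pp(S_r)$; for $k\geq 2$ this upper bound on the number of conditions exceeds $\dim\Pp(S_r)$ with positive slack $(k-1)r^2$ once $r\gg 0$, and a direct check settles the finitely many small $r$.

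The hard part is the lower bound on $\operatorname{rk}\rho_F$. The image of $\rho_F$ is spanned by the restrictions to $C$ of the products of the web coefficients with the degree-$k$ monomials in $F_x,F_y$, and it is \emph{not} all of $H^{0}(C,\OO_C(d+k(r-1)))$, so comparing $h^{0}$'s merely bounds the conditions from above; I must instead bound them from below, i.e. control the rank deficit of $\rho_F$ uniformly in $r$, and also over the non-generic (e.g. singular) curves $C$, where the rank may drop. This is the genuine obstacle, and it is precisely here that the hypothesis $d\geq 2$ enters. The borderline case $k=1$ is the classical theorem of Jouanolou \cite{Jou}: the two leading terms coincide, the deficit can absorb the whole gap, and the dimension count cannot conclude on its own, so for $k=1$ I would simply invoke Jouanolou's result. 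For $k\geq 2$ I expect the extra positivity coming from the symmetric power to dominate the deficit, reducing the theorem to the finite list of exceptional small $r$ together with a stratification/semicontinuity argument bounding $\dim\Sigma(r)$ as $C$ ranges over all irreducible curves of degree $r$.
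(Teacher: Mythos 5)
There is a genuine gap, and you have in fact located it yourself: the lower bound on $\operatorname{rk}\rho_{F}$ is not an auxiliary estimate to be filled in later, it \emph{is} the theorem, and nothing in the proposal establishes it. The image of $\rho_{F}$ lies inside the restriction to $C$ of (the relevant graded piece of) the $k$-th power of the Jacobian ideal of $F$, so its codimension in $H^{0}(C,\OO_{C}(d+k(r-1)))$ grows with $r$ and with the singularities of $C$; asserting that ``the extra positivity coming from the symmetric power dominates the deficit'' is exactly the statement that needs proof, uniformly over all irreducible (possibly very singular) curves of every degree $r$. Two further points make this route doubtful rather than merely incomplete. First, the inequality $\dim\Sigma(r)\leq\dim\Pp(S_{r})+\bigl(\dim\Ww(k,d)-\operatorname{rk}\rho_{F}\bigr)$ for \emph{generic} $F$ does not bound $\dim\mathcal{C}(r)$: a component of $\Sigma(r)$ may sit entirely over a proper closed stratum of $\Pp(S_{r})$ where $\ker\rho_{F}$ jumps, so one needs the fibre bound on every stratum, not at the generic point. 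Second, the case $k=1$ shows the leading terms exactly cancel, and it is well known that the naive dimension count does not prove Jouanolou's theorem; there is no a priori reason the deficit for $k\geq 2$ is $o(r^{2})$ rather than of order $r^{2}$, since for instance the web could be a product containing low-degree factors.

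The paper avoids any such rank computation. It lifts the web to the surface $S_{\WW}\subseteq M=\Pp(T\PP)$ and uses the Camacho--Sad index formula to show that the lifting $\widetilde{C}$ of any invariant algebraic curve must meet $\mathrm{sing}(\FF_{\WW})$, where $\FF_{\WW}$ is the foliation cut out on $S_{\WW}$ by the contact distribution. It then considers the incidence variety $\Ss=\{(S_{\WW},z):z\in\mathrm{sing}(\FF_{\WW})\}$, shows it is irreducible of dimension $\dim\Ww(k,d)$ (homogeneity of $M$ under lifted automorphisms of $\PP$ plus generic finiteness of $\Ss\to\Ww(k,d)$), organizes the invariance conditions by Hilbert polynomials via the closed sets $H(\DD)$, and concludes that if every $k$-web had an invariant curve then \emph{every} singular point of every $\FF_{\WW}$ would lie on one. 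This is contradicted by induction on $k$: one takes $\WW=\WW_{1}\boxtimes\GG$ with $\WW_{1}$ a generic $(k-1)$-web of degree $d$ without invariant curves (Jouanolou's theorem being the base case $k=1$) and $\GG$ a pencil of lines missing a singularity of $\FF_{\WW_{1}}$. If you want to salvage your approach you would need to prove the uniform rank bound for $\rho_{F}$, which is a substantial open-ended task; as written, the argument does not close.
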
 

First we recall some facts for a generic $k$-web $\WW$ of degree $d \geq 2$ on $\PP$ (see \cite{Jos}):
\begin{enumerate}

\item  The surface $S_{\WW} \subseteq M$ associated to $\WW$ is smooth and its class is given by $[S_{\WW}]=dh+k\hh$.

\item Let $\FF_{\WW}$ be the foliation on $S_{\WW}$ given by the restriction of the contact distribution, or by the lifting of the leaves of $\WW$; then the normal bundle of $\FF_{\WW}$ is given by $N\FF_{\WW} = \OO_{S}(h_{r}+\hh_{r})$, where $h_{r}$ and $\hh_{r}$ are the restriction of $h$ and $\hh$ to $S$. 

\item If we write the web in coordinates $(x,y)\in \CC^{2}$ as 
$$
\omega=a_{0}(x,y)dx^{k}+a_{1}(x,y)dx^{k-1}dy+\ldots+a_{k}(x,y)dy^{k}
$$
then $S_{\WW}$ is given by the zero set of 
$$
F(x,y,p,q)=a_{0}(x,y)q^{k}+a_{1}(x,y)q^{k-1}p+\ldots+a_{k}(x,y)p^{k}
$$
and the foliation $\FF_{\WW}$ is defined by the restriction of the vector field  
$$
X=(F_{p}\frac{\partial}{\partial x}-F_{x}\frac{\partial}{\partial p})+(F_{y}\frac{\partial}{\partial q}-F_{q}\frac{\partial}{\partial y})
$$
to $S_{\WW}$. Then the singular set of $\FF_{\WW}$ is given in this coordinates by $\{F=F_{q}=F_{p}=qF_{x}+pF_{y}=0\}$ which is a finite set.
\end{enumerate}
Let us suppose that $\WW$ has an algebraic invariant curve $C$ and let $\widetilde{C}$ its lifting to $M$, which is contained in $S$. 

\begin{lemma}
We have that $\widetilde{C} \cap sing(\FF_{\WW}) \neq \emptyset$.
\end{lemma}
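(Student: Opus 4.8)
The plan is to argue by contradiction, assuming $\widetilde{C}\cap \mathrm{sing}(\FF_{\WW})=\emptyset$, and to derive two incompatible values for the self-intersection number $\widetilde{C}\cdot\widetilde{C}$ computed inside the smooth surface $S$. First I would record that $\widetilde{C}$ is $\FF_{\WW}$-invariant: since $C$ is $\WW$-invariant it is a union of leaves of $\WW$, so at each smooth point its tangent direction is one of the web directions, whence $\widetilde{C}\subseteq S$ and $\widetilde{C}$ is a union of leaves of $\FF_{\WW}$. Next, under the standing assumption every point of $\widetilde{C}$ is a regular point of $\FF_{\WW}$; near a regular point the foliation is a trivial fibration, so the reduced invariant curve $\widetilde{C}$ coincides locally with a leaf and is therefore smooth. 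Thus $\widetilde{C}$ is a smooth compact curve, $\FF_{\WW}$-invariant and disjoint from the (finite) singular locus.

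For the first computation I would use that along a smooth invariant curve avoiding $\mathrm{sing}(\FF_{\WW})$ the tangent bundle of the foliation restricts to the tangent bundle of the curve, $T\FF_{\WW}|_{\widetilde{C}}=T\widetilde{C}$, so passing to the quotient in $TS|_{\widetilde{C}}$ gives an identification $N_{\widetilde{C}/S}\cong N\FF_{\WW}|_{\widetilde{C}}$ of the normal bundle of $\widetilde{C}$ in $S$ with the restriction of the normal bundle of the foliation. Taking degrees and using the fact $N\FF_{\WW}=\OO_{S}(h_{r}+\hh_{r})$, this yields
\[
\widetilde{C}\cdot\widetilde{C}=N\FF_{\WW}\cdot\widetilde{C}=(h+\hh)\cdot\widetilde{C}.
\]
I would then evaluate the right-hand side geometrically: $h\cdot\widetilde{C}=\deg C\geq 1$, since $\pi$ maps $\widetilde{C}$ birationally onto $C$, and $\hh\cdot\widetilde{C}=\deg\check{C}\geq 0$; hence $\widetilde{C}\cdot\widetilde{C}>0$.

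For the second computation I would invoke the Camacho--Sad index theorem, in the Brunella-style spirit of the references already used: for a smooth compact $\FF_{\WW}$-invariant curve one has $\widetilde{C}\cdot\widetilde{C}=\sum_{p}\mathrm{CS}(\FF_{\WW},\widetilde{C},p)$, the sum ranging over $\mathrm{sing}(\FF_{\WW})\cap\widetilde{C}$. Under our assumption this set is empty, so $\widetilde{C}\cdot\widetilde{C}=0$, contradicting the strict positivity just obtained; this forces $\widetilde{C}\cap\mathrm{sing}(\FF_{\WW})\neq\emptyset$. The points deserving care are the automatic smoothness of $\widetilde{C}$ under the no-singularity hypothesis, which is what lets both the normal-bundle identification and Camacho--Sad apply directly without passing to a normalization, and the clean evaluation $h\cdot\widetilde{C}=\deg C$, $\hh\cdot\widetilde{C}=\deg\check{C}$. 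The heart of the argument, and its only genuinely nontrivial ingredient, is the Camacho--Sad equality together with the positivity of $N\FF_{\WW}$.
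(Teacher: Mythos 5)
Your proposal is correct and follows essentially the same route as the paper: both argue by contradiction, using Camacho--Sad to get $\widetilde{C}^{2}=0$ and the relation $N\FF_{\WW}\cdot\widetilde{C}=\widetilde{C}^{2}+Z(\FF_{\WW},\widetilde{C})$ (which, with no singularities on $\widetilde{C}$, is exactly your identification $N_{\widetilde{C}/S}\cong N\FF_{\WW}|_{\widetilde{C}}$) against the positivity of $(h+\hh)\cdot\widetilde{C}$. You merely spell out two points the paper leaves implicit, namely the smoothness of $\widetilde{C}$ away from $\mathrm{sing}(\FF_{\WW})$ and the evaluation $h\cdot\widetilde{C}=\deg C\geq 1$, $\hh\cdot\widetilde{C}\geq 0$.
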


\begin{proof}
Let us suppose that $\widetilde{C} \cap sing(\FF_{\WW}) = \emptyset$, then by Camacho-Sad formula $\widetilde{C}^{2}=0$ and therefore
$$
N\FF_{\WW} . \widetilde{C}=\widetilde{C}^{2} + Z(\FF_{\WW}, \widetilde{C})=0
$$
(see \cite{Brunella}) which is not possible since $h_{r}.\widetilde{C}+ \hh_{r}.\widetilde{C}$ is a positive number.
\end{proof}

Remember that we can identify the set of $k$-webs of degree $d$ with $\Pp H^{0}(M, \OO_{M}(d,k))$. Consider now the algebraic set
$$
\Ss =\{(S_{\WW}, z) \in \Ww(k,d) \times M: z \in sing(\FF_{\WW})\}\subseteq \Ww(k,d) \times M
$$
and its projection over the second factor $\pi_{2}: \Ss \rightarrow M$. Observe that follows from remark (3) that for each $z\in M $ the fiber $\pi_{2}^{-1}(z)$ is a linear subspace of $\Ww(k,d) \times \{z\}$.\\ 

Since for every $z_{1}$, $z_{2} \in M$ there is a biholomorphism $F$ of the form $F(p,[v])=(f(p), Df(p)v)$, for some $f \in Aut(\PP)$, sending $z_{1}$ in $z_{2}$, we conclude that all the fibers of $\pi_{2}$ are smooth, irrecuble and isomorphic, which implies that $\Ss$ is irreducible. Observe also that the other projection $\pi_{1}: \Ss \rightarrow \Ww(k,d)$ is a generically finite map (by Baum-Bott formula is clear that this map is dominant), then $dim \Ss = dim \Ww(k,d)$.\\

Fix now a polynomial $\chi \in \QQ[t]$ of degree one different of the Hilbert polynomial of a fiber of $\pi$ and set $H=Hilb_{\chi}(M)$ the Hilbert scheme of M with respect to $\chi$. If we denote by $H(\DD)$ the subset of $H$ consisting of the subschemes of $M$ tangent to $\DD$ and with Hilbert polynomial $\chi$, then we shall prove in the next section that $H(\DD)$ is a closed subset of $H$. Then we can set the closed set $D\subseteq \Ww(k,d) \times M$ defined as
$$
D=\{(S_{\WW},z) \in \Ww(k,d) \times M: \exists Y \in H(\DD),\hspace{0.1cm} z\in Y,\hspace{0.1cm} Y \subset S_{\WW}\}.
$$ 

Let us assume the theorem for $(k-1)$-webs of degree $d\geq 2$ and suppose that $\pi_{1}(D)=\Ww(k,d)$; that is, every $k$-web of degree $d$ has an algebraic invariant curve whose lifting has Hilbert polynomial $\chi$. By the lemma $\pi_{1}$ sends a dense subset of $D \cap \Ss$ over a dense subset of $\Ww(k,d)$. Therefore $\pi_{1}(D \cap \Ss)=\Ww(k,d)$, but since $\Ss$ is irreducible and has the same dimension that $\Ww(k,d)$ we conclude that $D\cap \Ss= \Ss$.\\

To conclude the theorem we choose a generic $(k-1)$-web of degree $d$ $\WW_{1}$ with no algebraic invariant curves and a pencil of lines through a point $\GG$ such that there exist a sintularity $z$ of $\FF_{\WW_{1}}$ which is not in $S_{\GG}$ and take $\WW=\WW_{1}\boxtimes \GG$. Then we note that $S_{\WW}=S_{\WW_{1}}\cup S_{\GG}$  and through $z$, which is a singularity of $\FF_{\WW}$ we do not have any invariant curve different from a fiber, which is a contradiction. Since there are only countable many Hilbert polynomials, we conclude the theorem.

\subsection{Webs on complex surfaces}
We can obtain a similar result for webs on complex surfaces. Let $S$ be a compact complex surface and we set $\Ww(k,\NN)=\Pp H^{0}(S, \Sym^{k}\Omega^{1}_{S}\otimes \NN)$ the space of $k$-webs on $S$ with normal bundle $\NN$. 

\begin{theorem}
Let $\NN$ be an ample line bundle. Then for $r \gg 0$ the $k$-web on $S$ induced by a very generic element of $\Ww(k, \NN^{\otimes r})$ has not algebraic invariant curves. 
\end{theorem}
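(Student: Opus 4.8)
The plan is to follow the strategy of Coutinho--Pereira \cite{CouPer}, working directly on $S$: I would exhibit the locus of webs admitting an algebraic invariant curve as a \emph{countable} union of \emph{proper} closed subvarieties of $\Ww(k,\NN^{\otimes r})$, so that its complement, the very generic web, has no invariant curve at all. By the remark following equation (\ref{curva-invatiante}) it suffices to rule out integral invariant curves.

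First I would stratify invariant curves by the components of the Hilbert scheme of $S$, of which there are only countably many. For an integral curve $C\subseteq S$ with inclusion $i$, the invariance condition $i^{*}\omega\equiv 0$ is linear in $\omega$, so the webs $\WW=[\omega]$ leaving $C$ invariant form a linear subspace $V_{C}\subseteq H^{0}(S,\Sym^{k}\Omega^{1}_{S}\otimes\NN^{\otimes r})$. Fixing a component $\mathcal{H}$ of the Hilbert scheme (equivalently fixing the numerical invariants $\delta:=\NN\cdot C$, the arithmetic genus $g$, and $K_{S}\cdot C$), I form the incidence variety $\mathcal{I}_{\mathcal{H}}=\{(\WW,C):C\in\mathcal{H},\ i^{*}\omega\equiv 0\}$. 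Exactly as in Proposition \ref{webs-with-curve-of-degree-r}, the condition $i^{*}\omega\equiv 0$ is closed, so $\mathcal{I}_{\mathcal{H}}$ is closed, and since $\mathcal{H}$ is projective its image $\mathcal{C}(\mathcal{H})\subseteq\Ww(k,\NN^{\otimes r})$ under the first projection is a closed algebraic subset. Thus the locus of webs with an invariant curve is $\bigcup_{\mathcal{H}}\mathcal{C}(\mathcal{H})$, a countable union.

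The heart of the matter is to show each $\mathcal{C}(\mathcal{H})$ is proper, i.e. that the number of conditions imposed by invariance exceeds $\dim\mathcal{H}$. The invariance map sends $\omega$ to its leading restriction $i^{*}\omega\in H^{0}(C,K_{C}^{\otimes k}\otimes\NN^{\otimes r}|_{C})$; for $r\gg 0$ a Serre--Fujita vanishing/regularity argument makes this map surjective, so its rank equals $c(\mathcal{H})=(2k-1)(g-1)+r\delta$ by Riemann--Roch. On the other side $\dim\mathcal{H}\le h^{0}(C,N_{C/S})$, which is $C^{2}+1-g$ when $N_{C/S}$ is non-special and, by Clifford's theorem, at most $\tfrac{1}{2}C^{2}+1$ when it is special. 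Substituting $C^{2}=2g-2-K_{S}\cdot C$, one checks in both cases, using $\delta\ge 1$ and $|K_{S}\cdot C|\le C_{0}\delta$, that
\[
c(\mathcal{H})-\dim\mathcal{H}\ \ge\ r\delta-C_{0}\delta-C_{1}\ \ge\ (r-C_{0})-C_{1},
\]
for constants $C_{0},C_{1}$ depending only on $S,\NN,k$. Hence there is a threshold $r_{0}$ with $c(\mathcal{H})>\dim\mathcal{H}$ for every $\mathcal{H}$ as soon as $r\ge r_{0}$, giving $\dim\mathcal{C}(\mathcal{H})\le\dim\mathcal{I}_{\mathcal{H}}<\dim\Ww(k,\NN^{\otimes r})$.

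The hard part will be the \emph{uniformity} of this estimate over the infinitely many components $\mathcal{H}$: a single $r_{0}$ must beat every curve degree simultaneously. The resolution is that curves of large $\NN$-degree impose proportionally more conditions (the term $r\delta$) while the genus cancellation above keeps their family dimension controlled, so only the finitely many bounded-degree components are actually constraining; for those, Serre vanishing fixes the surjectivity of the restriction map once $r$ is large, and the unbounded ones satisfy the inequality automatically. I expect the genuinely technical steps to be (i) this uniform surjectivity statement, ensuring $c(\mathcal{H})$ attains its Riemann--Roch value for all $\mathcal{H}$ at once, and (ii) extending the Clifford-type bound on $h^{0}(N_{C/S})$ to singular members of $\mathcal{H}$ via the arithmetic genus and the normalization. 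Granting these, the complement of the countable union $\bigcup_{\mathcal{H}}\mathcal{C}(\mathcal{H})$ is precisely the very generic locus, whose members admit no algebraic invariant curve, proving the theorem.
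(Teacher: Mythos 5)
Your stratification by components of the Hilbert scheme and the closedness of each $\mathcal{C}(\mathcal{H})$ are fine and agree with the paper's setup, but the properness argument --- the heart of the proof --- has a genuine gap, and it occurs exactly where you wave it away. The rank $c(\mathcal{H})$ of the linear map $\omega\mapsto i^{*}\omega$ can never exceed $\dim H^{0}(S,\Sym^{k}\Omega^{1}_{S}\otimes\NN^{\otimes r})$, a constant once $r$ is fixed, whereas your target $H^{0}(C,K_{C}^{\otimes k}\otimes\NN^{\otimes r}|_{C})$ has dimension $(2k-1)(g-1)+r\delta$, which tends to infinity with $\delta=\NN\cdot C$. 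So for all but finitely many components the restriction map cannot be surjective and the value $c(\mathcal{H})=(2k-1)(g-1)+r\delta$ is unattainable; no choice of $r$ repairs this, since enlarging $r$ also enlarges the target. Worse, $\dim\mathcal{H}$ is unbounded over the components (already $\mathcal{H}\supseteq|\NN^{\otimes m}|$ gives dimension growing like $m^{2}\NN^{2}/2$), so for large-degree components one has $c(\mathcal{H})\leq\dim\Ww(k,\NN^{\otimes r})<\dim\mathcal{H}$ and the desired inequality goes the wrong way. Your claim that the unbounded components ``satisfy the inequality automatically'' is exactly backwards: this is the classical obstruction that makes Jouanolou-type statements resistant to naive parameter counts, and it is why neither Jouanolou nor Coutinho--Pereira argue this way.

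The paper's proof of properness is entirely different and much shorter: for $r\gg0$ write $r=r_{1}+\dots+r_{k}$ with each $r_{i}\gg0$ and choose $\omega_{i}\in H^{0}(S,\Omega^{1}_{S}\otimes\NN^{\otimes r_{i}})$ defining a foliation with no algebraic invariant curves (Coutinho--Pereira, Theorem 1.1). The product $\omega=\omega_{1}\cdots\omega_{k}$ lies in $\Ww(k,\NN^{\otimes r})$, and by the irreducibility remark after equation (\ref{curva-invatiante}) any irreducible curve invariant by this web would be invariant by some $\omega_{i}$, which is impossible. This single explicit web lies outside every stratum at once, so each closed stratum is proper and the very generic web has no invariant curve. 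If you wanted to salvage a dimension count, you would have to treat the high-degree components by a separate mechanism (for instance a Darboux-type bound excluding invariant curves of arbitrarily large degree for a fixed web without a first integral), which is a substantial additional argument rather than a technical footnote.
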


\begin{proof}
Following the notation of the previous section, one can define the closed set $D=\{(S_{\WW},z) \in \Ww(k,\NN) \times M: \exists Y \in H(\DD),\hspace{0.1cm} z\in Y,\hspace{0.1cm} Y \subset S_{\WW}\}$, where $M=\Pp(TS)$. Then for $r\gg 0$ we can choose integers $r_{i}\gg 0$ which add up to $r$ and $\omega_{i}\in \Pp H^{0}(S, \Omega^{1}_{S}\otimes \NN^{\otimes r_{i}})$ such that the foliation on $S$ defined by $\omega_{i}$ has not algebraic invariant curves (see \cite{CouPer} theorem 1.1). Then is clear that $\omega=\omega_{1} \ldots \omega_{k}$ is an element of $\Ww(k, \NN^{\otimes r})$ which is not in the image of $D$ by the first projection.
\end{proof}

\begin{remark}
Unlike Theorem \ref{jouanolou-para-webs}, this result is not effective in the sense that we do not describe the webs whose normal bundle is not sufficiently ample.
\end{remark}

\section{Proof of Theorem \ref{jouanolou-para-ecuaciones-diferenciales}}
Let us consider a nonsingular distribution of codimension one $D$ defined by a 1-form $\alpha$ over a complex manifold $M$.
\begin{lemma}\label{subesquema-tangen-a-distribucion}
Let be $Y\subseteq M$ an irreducible subvariety (not necessarily regular) of codimension $k$ and let $\II$ be its ideal sheaf. Then $Y$ is tangent to $D$ if and only if for every $(f_{1}, \ldots , f_{k})\in \II^{\oplus k}$, the equality $$
\alpha \wedge df_{1} \wedge \ldots \wedge df_{k} |_{Y} \equiv 0
$$
holds true.
\end{lemma}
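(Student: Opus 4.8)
The plan is to read the restriction $\alpha \wedge df_{1}\wedge\cdots\wedge df_{k}|_{Y}$ pointwise: for $y\in Y$ it is the element $(\alpha\wedge df_{1}\wedge\cdots\wedge df_{k})_{y}$ of $\wedge^{k+1}T_{y}^{*}M$ obtained by restricting the coefficients of the ambient $(k+1)$-form to $Y$ (the naive pullback to $Y$ is useless, since each $df_{i}$ already dies on $T_{y}Y$). Since $\alpha$ and the $f_{i}$ are holomorphic, this is a holomorphic section of $\wedge^{k+1}T^{*}M|_{Y}$, so it vanishes on all of $Y$ as soon as it vanishes on the dense open smooth locus $Y_{\mathrm{reg}}$; here I use that $Y$ is irreducible. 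Hence I may verify both implications at a fixed smooth point $y\in Y_{\mathrm{reg}}$, where \emph{$Y$ tangent to $D$} means $T_{y}Y\subseteq D_{y}=\ker\alpha_{y}$.

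The single computation underlying everything is a conormal count. For $f\in\II$ and $y\in Y_{\mathrm{reg}}$ the function $f$ vanishes along $Y$, so for any $v\in T_{y}Y$ one has $df(y)(v)=v(f)=0$; thus $df(y)$ lies in the conormal space $N^{*}_{y}Y=\mathrm{Ann}(T_{y}Y)\subseteq T_{y}^{*}M$, which has dimension exactly $k$. I will combine this with two standard linear-algebra facts: a collection of covectors wedges to zero whenever it is linearly dependent, and for linearly independent $\xi_{1},\dots,\xi_{k}$ one has $\omega\wedge\xi_{1}\wedge\cdots\wedge\xi_{k}=0$ precisely when $\omega\in\mathrm{span}(\xi_{1},\dots,\xi_{k})$.

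For the forward implication, assume $Y$ is tangent to $D$. Then at $y\in Y_{\mathrm{reg}}$ the covector $\alpha_{y}$ annihilates $T_{y}Y$, i.e. $\alpha_{y}\in N^{*}_{y}Y$; together with $df_{1}(y),\dots,df_{k}(y)\in N^{*}_{y}Y$ this gives $k+1$ covectors inside the $k$-dimensional space $N^{*}_{y}Y$, hence linearly dependent, so $\alpha_{y}\wedge df_{1}(y)\wedge\cdots\wedge df_{k}(y)=0$. This holds for every tuple in $\II^{\oplus k}$ and every $y\in Y_{\mathrm{reg}}$, and the density argument promotes it to all of $Y$. Conversely, assume the wedge vanishes on $Y$ for all tuples in $\II^{\oplus k}$. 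Fix $y\in Y_{\mathrm{reg}}$ and choose local sections $f_{1},\dots,f_{k}$ of $\II$ generating the ideal at $y$, so that $df_{1}(y),\dots,df_{k}(y)$ is a basis of $N^{*}_{y}Y$ (the Jacobian criterion at a smooth point). The hypothesis gives $\alpha_{y}\wedge df_{1}(y)\wedge\cdots\wedge df_{k}(y)=0$ with the $df_{i}(y)$ independent, whence $\alpha_{y}\in\mathrm{span}(df_{1}(y),\dots,df_{k}(y))=N^{*}_{y}Y$; thus $\alpha_{y}$ kills $T_{y}Y$ and $T_{y}Y\subseteq\ker\alpha_{y}=D_{y}$, i.e. $Y$ is tangent to $D$ at $y$.

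The delicate points are bookkeeping rather than substance: first, pinning down the meaning of the restriction $|_{Y}$ (ambient coefficients, not the intrinsic pullback) and justifying the passage from $Y_{\mathrm{reg}}$ to $Y$ by irreducibility and continuity of holomorphic sections; second, producing, near a smooth point, local generators of $\II$ with independent differentials, which is exactly where smoothness of $Y$ (rather than mere reducedness) enters. I expect this to be the main obstacle in a fully rigorous write-up, whereas the core is the elementary observation that $\alpha_{y}$ and the $df_{i}(y)$ compete for room inside the $k$-dimensional conormal space.
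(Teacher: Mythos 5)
Your argument is correct and is essentially the paper's: both reduce to a fixed smooth point of $Y$ (using irreducibility to pass back to all of $Y$), where the paper's adapted coordinates $Y\cap V=\{x_{1}=\dots=x_{k}=0\}$ and coefficient functions $b_{k+1},\dots,b_{n}$ are just the explicit form of your statement that the $df_{i}(y)$ and (under tangency) $\alpha_{y}$ all live in the $k$-dimensional conormal space, forcing the wedge to vanish. Your coordinate-free phrasing additionally writes out the converse via local generators of $\II$ with independent differentials, which the paper leaves to the reader; otherwise the two proofs coincide.
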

\begin{proof}
Let us suppose that $Y$ is tangent to $D$, that is, $Y$ is tangent to $D$ at the smooth points, and take $f_{1}, \ldots , f_{k} \in \II$. Then, at a regular point $y \in Y$ we can take an open neighborhood $V \subseteq M$ and local coordinates $(x_{1}, \ldots, x_{n})$ such that $Y \cap V=\{x_{1}= \ldots x_{k}=0\}$. Therefore we can write 
$$
f_{i}|_{V}=\sum_{j=1}^{k} a_{ij}x_{j}, \hspace{0.2cm} \alpha=b_{1}dx_{1}+\ldots+ b_{n}dx_{n}
$$
for some analytic functions $a_{ij}$, $b_{i}$. So, by hypotesis we have
$$
b_{k+1}|_{V\cap Y}=\ldots=b_{n}|_{V \cap Y}=0.
$$
Since
\begin{eqnarray*}
\alpha \wedge df_{1} \wedge \ldots \wedge df_{k} |_{V}&=&h \alpha \wedge dx_{1} \wedge \ldots \wedge dx_{k} |_{V}\\
&=& h(b_{k+1}dx_{k+1} \wedge dx_{1}\wedge \ldots \wedge dx_{k}+ \ldots b_{n}dx_{n} \wedge dx_{1}\wedge \ldots \wedge dx_{k} )
\end{eqnarray*}
(here $h$ is an invertible function) we obtain that
$$
Y\cap V \subseteq sing(\alpha \wedge df_{1}\wedge \dots \wedge df_{k})
$$
and this shows the assertion. The other implication is left to the reader.
\end{proof}

Let us return to the case when $M=\Pp(T\PP)$ is the contact variety and $D=\DD$ is the contact distribution. Fix now a polynomial $\chi \in \QQ[t]$ of degree one and set $H=Hilb_{\chi}(M)$ the Hilbert scheme of M with respect to $\chi$. Define also $H(\DD)$, a subset of $H$, by
$$
H(\DD)=\{Y\in H: Y \hspace{0.1cm}is\hspace{0.1cm} tangent \hspace{0.1cm} to\hspace{0.1cm} \DD \}.
$$ 
\begin{remark}
It is easy to show that $\DD$ has no invariant subvarieties of dimension two, so the condition on the degree of $\chi$ is not really necessary.
\end{remark}

\begin{proposition}
$H(\DD)$ is a closed subset of $H$.
\end{proposition}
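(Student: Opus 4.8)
The plan is to exhibit $H(\DD)$ as the vanishing locus of a morphism of vector bundles on $H$, built from the universal family, and to identify this vanishing with tangency via Lemma \ref{subesquema-tangen-a-distribucion}. Since $\chi$ has degree one, a subscheme $Y\in H$ is a curve, of codimension $2$ in the threefold $M$, so the criterion of Lemma \ref{subesquema-tangen-a-distribucion} reads: $Y$ is tangent to $\DD$ if and only if $(\alpha\wedge df_{1}\wedge df_{2})|_{Y}\equiv 0$ for every pair $(f_{1},f_{2})\in\II^{\oplus 2}$, where $\II$ is the ideal sheaf of $Y$. A Leibniz computation shows that this expression vanishes on $Y$ whenever $f_{1}\in\II^{2}$ (the term $g\,df_{1}$ with $f_{1}|_{Y}=0$ drops), and that it is $\OO_{Y}$-linear after restriction; hence tangency is equivalent to the vanishing of the $\OO_{Y}$-linear map $\wedge^{2}(\II/\II^{2})\to (K_{M}\otimes N_{\DD})|_{Y}$, $f_{1}\wedge f_{2}\mapsto(\alpha\wedge df_{1}\wedge df_{2})|_{Y}$ (here $\alpha\wedge df_{1}\wedge df_{2}$ is a section of $\Omega^{3}_{M}\otimes N_{\DD}=K_{M}\otimes N_{\DD}$ because $M$ is a smooth threefold).

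The step I expect to be the main obstacle is making this construction well-defined and functorial in families after twisting, since a section of a line bundle has no intrinsic differential. Fix an ample line bundle $\OO_{M}(1)$ on $M$ and write $\OO_{M}(m)=\OO_{M}(1)^{\otimes m}$. For $f_{1},f_{2}\in H^{0}(M,\II(m))$ I would check that $(\alpha\wedge df_{1}\wedge df_{2})|_{Y}$ is a well-defined section of $(K_{M}\otimes N_{\DD}\otimes\OO_{M}(2m))|_{Y}$: in a local trivialization $f_{i}=t\,g_{i}$ with $g_{i}\in\II$ and $t$ a generator of $\OO_{M}(m)$, one has $df_{i}|_{Y}=t\,dg_{i}|_{Y}$ because the term $g_{i}\,dt$ vanishes on $Y$, so $(\alpha\wedge df_{1}\wedge df_{2})|_{Y}=t^{2}(\alpha\wedge dg_{1}\wedge dg_{2})|_{Y}$; the $t^{2}$ ambiguity is absorbed exactly by $\OO_{M}(2m)$. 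It is precisely the restriction to $Y$ that kills the trivialization-dependent terms and rescues the construction.

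With this in hand I would assemble the family version. Let $\mathcal{Y}\subseteq H\times M$ be the universal subscheme, $p\colon\mathcal{Y}\to H$ and $\mathrm{pr}_{M}\colon H\times M\to M$ the projections, and $\II_{\mathcal{Y}}\subseteq\OO_{H\times M}$ its ideal; since $\OO_{\mathcal{Y}}$ is flat over $H$, so is $\II_{\mathcal{Y}}$. By boundedness of the Hilbert scheme there is an $m\gg 0$, uniform over $H$, for which $\mathcal{A}:=p_{*}(\II_{\mathcal{Y}}\otimes\mathrm{pr}_{M}^{*}\OO_{M}(m))$ and $\mathcal{B}:=p_{*}\big(\mathrm{pr}_{M}^{*}(K_{M}\otimes N_{\DD}\otimes\OO_{M}(2m))|_{\mathcal{Y}}\big)$ are locally free and commute with base change, with fibres $H^{0}(M,\II_{Y}(m))$ and $H^{0}(Y,(K_{M}\otimes N_{\DD}\otimes\OO_{M}(2m))|_{Y})$. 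Using the relative differential $d_{H\times M/H}$, the pointwise construction globalizes to an $\OO_{H}$-linear map $\Phi\colon\wedge^{2}\mathcal{A}\to\mathcal{B}$ whose fibre at $h$ is the alternating map $(f_{1},f_{2})\mapsto(\alpha\wedge df_{1}\wedge df_{2})|_{Y_{h}}$.

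Finally I would identify $\{\Phi_{h}=0\}$ with $H(\DD)$. For $m\gg 0$ the global sections of $\II_{Y_{h}}(m)$ generate $\II_{Y_{h}}$, hence generate $\II_{Y_{h}}/\II_{Y_{h}}^{2}$ after twisting; since the tangency criterion is $\OO_{Y_{h}}$-linear on $\wedge^{2}(\II/\II^{2})$, its vanishing on this generating set is equivalent to the full criterion of Lemma \ref{subesquema-tangen-a-distribucion}. Thus $H(\DD)=\{h\in H:\Phi_{h}=0\}$, and because $\mathcal{A},\mathcal{B}$ commute with base change one has $\Phi_{h}=\Phi\otimes k(h)$, so $H(\DD)$ is the zero locus of the section $\Phi$ of the vector bundle $\mathcal{H}om(\wedge^{2}\mathcal{A},\mathcal{B})$ on $H$. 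The vanishing locus of a section of a vector bundle is Zariski closed, so $H(\DD)$ is a closed subset of $H$, as desired.
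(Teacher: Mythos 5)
Your proof is correct and rests on the same two pillars as the paper's argument: the pointwise tangency criterion of Lemma \ref{subesquema-tangen-a-distribucion} and its relativization over the universal family of the Hilbert scheme; the difference is only in the technical packaging. The paper works upstairs on $\UU\subseteq M\times H$: it forms the relative conormal sheaf $\KK$, wedges with the contact form to get $\theta:\bigwedge^{2}\KK\to(q_{1}^{*}\Omega^{1}_{M})\otimes\LL$, handles the fact that $\bigwedge^{2}\KK$ need not be locally free by twisting with a very ample $\MM^{\otimes r}$ and surjecting from $\OO_{\UU}^{\oplus N}$, and then quotes Lemma 2.2 of Coutinho--Pereira together with Serre vanishing to conclude that $\{Y:\sigma_{Y}=0\}$ is closed. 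You instead push everything down to $H$: you twist the ideal sheaf by $\OO_{M}(m)$ for a uniform $m\gg0$, use boundedness plus cohomology and base change to make $p_{*}(\II_{\mathcal{Y}}\otimes\mathrm{pr}_{M}^{*}\OO_{M}(m))$ and the target locally free with the expected fibres, and exhibit $H(\DD)$ as the zero locus of a morphism of vector bundles on $H$. The two devices are interchangeable --- your base-change argument is essentially what the quoted Coutinho--Pereira lemma encapsulates --- but your version is self-contained and makes explicit two points the paper leaves implicit: that $(\alpha\wedge df_{1}\wedge df_{2})|_{Y}$ depends on the $f_{i}$ only modulo $\II^{2}$ and is $\OO_{Y}$-bilinear, so that checking vanishing on the global sections of $\II(m)$ (which generate for $m\gg0$) suffices; and that the trivialization ambiguity in differentiating a section of a line bundle is exactly absorbed by the extra twist after restriction to $Y$. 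The only caveats, shared with the paper, are that Lemma \ref{subesquema-tangen-a-distribucion} is stated for irreducible subvarieties while $H$ parametrizes arbitrary one-dimensional subschemes, and that the uniformity of $m$ over $H$ rests on the boundedness of the family with fixed Hilbert polynomial, which you do invoke.
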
  

\begin{proof}
Let $\UU$ be the universal family in $M \times H$
\[
\xymatrix{
&\UU \ar[dl]_{q_{1}} \ar[dr]^{q_{2}} &\subseteq  M\times H \\
M& &H
}
\]

Remember that for any $Y \subseteq M$ closed subscheme one has the following exact sequence (see \cite{Hrs}, section 2.8)
\[
\xymatrix{
\frac{\II}{\II^{2}} \ar^-{\delta}[r]&\Omega^{1}_{M}|_{Y} \ar[r]&\Omega^{1}_{Y} \ar[r]& 0
} 
\]
where $\delta(f)=df|_{Y}$ and $\II$ is the ideal sheaf of $Y$. This means, writing $\KK_{Y}=Im (\delta)$ (the conormal sheaf of $Y$), that one has
\[
\xymatrix{
0 \ar[r]&\KK_{Y} \ar[r]&\Omega^{1}_{M}|_{Y} \ar[r]&\Omega^{1}_{Y} \ar[r]&0
}.
\]
Then we can consider $\KK$ defined by
\[
\xymatrix{
0 \ar[r]&\KK \ar[r]&(\Omega^{1}_{M\times H |H})|_{\UU} \ar[r]&\Omega^{1}_{\UU|H} \ar[r]&0
}.
\]
By doing the exterior product by the contact form $\alpha$ we have a map
$$
\xymatrix{
\bigwedge^{2}\KK \ar^-{\theta}[r] &(q_{1}^{*}\Omega^{1}_{M}) \otimes \LL
}
$$ 
for some line bundle $\LL$. We conclude by lemma (\ref{subesquema-tangen-a-distribucion}) that
$$
H(\DD)=\{Y \in H : \theta _{Y}=0\}.
$$
Let $\MM$ be a very ample sheaf over $\UU$. Give an integer $r \gg 0 $ it follows by Serre's theorem that there exists a positive integer $N$ and a surjective map $\beta: \OO_{\UU}^{\oplus N} \rightarrow \bigwedge^{2}\KK \otimes \MM ^{\otimes r}$. Denoting by $\sigma$ the composition
\[
\xymatrix{
\OO_{\UU}^{\oplus N}\ar^-{\beta}[r]\ar@/_{7mm}/[rr]_{\sigma} &\bigwedge^{2}\KK \otimes \MM ^{\otimes r}\ar^-{\theta}[r] &(q_{1}^{*}\Omega^{1}_{M}) \otimes \LL \otimes \MM ^{\otimes r}
}
\]
we have, since $\beta$ is surjective, that 
$$
H(\DD)=\{Y \in H: \sigma_{Y}=0\}.
$$ We conclude the proof apllying the next lemma, which is exactly the lemma (2.2) of \cite{CouPer}, to $\mathfrak{X}=\UU$, $T=H$, $p=q_{2}$, $\FF=(q_{1}^{*}\Omega^{1}_{M}) \otimes \LL \otimes \MM ^{\otimes r}$ and $\GG=\OO_{S}^{\oplus N}$, and using theorem 3.8.8 of \cite{Hrs} to obtain $R^{1}p_{*}\FF=0$.  
\end{proof}

\begin{lemma}
Let $p: \mathfrak{X} \rightarrow T$ be a projective morphism. Assume that $\FF$ is a $p$-flat coherent $\OO_{\mathfrak{X}}$-module such that $R^{1}p_{*}\FF=0$. If $\GG$ is a quasi-coherent $\OO_{T}$-modulo and $\sigma: p^{*}\GG \rightarrow \FF$ is a homomorphism of $\OO_{\mathfrak{X}}$-modules, then the set $\{t \in T: \sigma_{t}=0\}$ is closed in $T$.
\end{lemma}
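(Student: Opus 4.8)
The plan is to convert the fibrewise condition $\sigma_t=0$ into an upper-semicontinuity statement on the base $T$, by passing through adjunction and cohomology-and-base-change. First I would use the adjunction between $p^{*}$ and $p_{*}$ to rewrite the homomorphism $\sigma:p^{*}\GG\to\FF$ as a homomorphism $\tilde\sigma:\GG\to\mathcal{H}$, where $\mathcal{H}:=p_{*}\FF$ is coherent because $p$ is projective, $\FF$ is coherent and $T$ is noetherian. The role of the two hypotheses on $\FF$ is precisely to control $\mathcal{H}$: since $\FF$ is $p$-flat and $R^{1}p_{*}\FF=0$ (and the higher direct images vanish as well, which is automatic when the fibres of $p$ are curves, as in the application of the proposition above), Grothendieck's cohomology-and-base-change theorem gives that $\mathcal{H}$ is locally free and that its formation commutes with base change; in particular the natural map $\beta_{t}:\mathcal{H}\otimes k(t)\to H^{0}(\mathfrak{X}_{t},\FF_{t})$ is an isomorphism for every $t\in T$, where $\mathfrak{X}_{t}=p^{-1}(t)$ and $\FF_{t}=\FF|_{\mathfrak{X}_{t}}$.

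Next I would identify the two vanishing conditions. Restricting $\sigma$ to the fibre $\mathfrak{X}_{t}$ gives a map $p_{t}^{*}(\GG\otimes k(t))\to\FF_{t}$, which by fibrewise adjunction corresponds to a $k(t)$-linear map $\GG\otimes k(t)\to H^{0}(\mathfrak{X}_{t},\FF_{t})$. Compatibility of the adjunction unit with base change shows this map equals $\beta_{t}\circ(\tilde\sigma\otimes k(t))$, and since $\beta_{t}$ is an isomorphism we conclude that $\sigma_{t}=0$ if and only if $\tilde\sigma\otimes k(t)=0$. This reduces the whole problem to showing that the set $Z:=\{t\in T:\ \tilde\sigma\otimes k(t)=0\}$ is closed, for a homomorphism $\tilde\sigma:\GG\to\mathcal{H}$ into a \emph{locally free} sheaf $\mathcal{H}$.

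Finally I would run the semicontinuity argument on the base. Let $\mathcal{J}:=\mathrm{Im}(\tilde\sigma)\subseteq\mathcal{H}$; this is coherent, being a quasi-coherent subsheaf of a coherent sheaf over a noetherian base, and set $\mathcal{Q}:=\mathcal{H}/\mathcal{J}$. Because $\GG\otimes k(t)\to\mathcal{J}\otimes k(t)$ is surjective, the condition $\tilde\sigma\otimes k(t)=0$ is equivalent to the induced map $\mathcal{J}\otimes k(t)\to\mathcal{H}\otimes k(t)$ being zero. Tensoring $0\to\mathcal{J}\to\mathcal{H}\to\mathcal{Q}\to 0$ with $k(t)$, the map $\mathcal{H}\otimes k(t)\to\mathcal{Q}\otimes k(t)$ is always surjective and is an isomorphism exactly when that previous map vanishes; hence, using that $\mathcal{H}$ is locally free of locally constant rank $s$, one gets $Z=\{t:\ \dim_{k(t)}\mathcal{Q}\otimes k(t)\ge s\}$, which is closed by the upper-semicontinuity of the fibre dimension of the coherent sheaf $\mathcal{Q}$ (Nakayama).

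The step I expect to be the crux is the application of cohomology and base change guaranteeing that $\mathcal{H}=p_{*}\FF$ is locally free, since this is exactly where the hypotheses of $p$-flatness and $R^{1}p_{*}\FF=0$ enter. It is genuinely essential: without local freeness of $\mathcal{H}$ the fibre dimension $\dim_{k(t)}\mathcal{H}\otimes k(t)$ need not be constant, the rewriting of $Z$ as a dimension-jump locus of $\mathcal{Q}$ breaks down, and $Z$ need not be closed. Everything else is formal manipulation of adjunction and right-exactness of $\otimes k(t)$.
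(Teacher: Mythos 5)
First, a remark on the comparison: the paper does not prove this lemma at all --- it is quoted verbatim as Lemma 2.2 of \cite{CouPer} --- so your argument has to be judged on its own merits. Your overall architecture (adjunction, then semicontinuity on the base) is reasonable, and the final semicontinuity step for a homomorphism into a locally free sheaf is correct. The gap sits exactly at the step you single out as the crux. Cohomology and base change does \emph{not} say that $p$-flatness together with $R^{1}p_{*}\FF=0$ makes $\mathcal{H}=p_{*}\FF$ locally free and compatible with base change: the hypothesis of that theorem is the \emph{fibrewise} vanishing $H^{1}(\mathfrak{X}_{t},\FF_{t})=0$ for all $t$, which is strictly stronger than $R^{1}p_{*}\FF=0$ as soon as the fibres of $p$ have dimension at least $2$. (The top direct image $R^{n}p_{*}$, $n$ the fibre dimension, always commutes with arbitrary base change, so its vanishing does control $H^{n}$ of the fibres; there is no such implication for $i<n$.) If $\beta_{t}$ fails to be injective, the equivalence $\sigma_{t}=0\Leftrightarrow\tilde{\sigma}\otimes k(t)=0$ survives in only one direction, and if $\mathcal{H}$ fails to be locally free your description of $Z$ as a rank-jump locus of $\mathcal{Q}$ breaks down --- as you yourself observe. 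Your parenthetical escape route (``the fibres are curves in the application'') is correct but proves less than the lemma: when the relative dimension is $1$, $R^{1}p_{*}$ is right exact, so $R^{1}p_{*}\FF=0$ forces $H^{1}(\mathfrak{X}_{t},\FF_{t})=R^{1}p_{*}\FF\otimes k(t)=0$ and your argument closes; that covers the use made of the lemma for $q_{2}:\UU\rightarrow H$, whose fibres are one-dimensional, but not the general projective morphism of the statement.

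One way to prove the lemma as stated is not to push all the way down to $p_{*}\FF$. Work locally on $T=\mathrm{Spec}(A)$, write $\GG$ as a quotient of a free module and use right-exactness of $p^{*}$ and of $\otimes\, k(t)$ to reduce to $\GG=\OO_{T}$ (an arbitrary intersection of closed sets being closed), so that $\sigma$ is a single global section $s\in H^{0}(\mathfrak{X},\FF)$. Flatness of $\FF$ gives a finite complex $K^{\bullet}$ of finite locally free $A$-modules computing $H^{i}(\mathfrak{X},\FF\otimes_{A}M)=H^{i}(K^{\bullet}\otimes_{A}M)$ functorially in $M$; in particular $H^{0}(\mathfrak{X},\FF)=\ker(K^{0}\rightarrow K^{1})\subseteq K^{0}$, and the restriction map to $H^{0}(\mathfrak{X}_{t},\FF_{t})=\ker(K^{0}\otimes k(t)\rightarrow K^{1}\otimes k(t))$ is induced by $K^{0}\rightarrow K^{0}\otimes k(t)$. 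Hence $\sigma_{t}=0$ if and only if $s$ maps to zero in $K^{0}\otimes k(t)$, i.e.\ $t$ lies in the zero locus of a section of the vector bundle associated to $K^{0}$, which is closed. This bypasses the base-change issue entirely (and shows that the hypothesis $R^{1}p_{*}\FF=0$ is not where the real content of the lemma lies).
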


As a consequence one concludes the following proposition.

\begin{proposition}
The set 
$$
\ZZ_{\chi}=\{X \in \EE(a,b): \FF_{X} \hspace{0.1cm} has \hspace{0.1cm} an \hspace{0.1cm} invariant \hspace{0.1cm} subscheme \hspace{0.1cm} of \hspace{0.1cm} Hilbert \hspace{0.1cm} polynomial \hspace{0.1cm}\chi\}
$$
is closed in $\EE(a,b)$.
\end{proposition}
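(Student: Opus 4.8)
The plan is to realize $\ZZ_{\chi}$ as the image of a closed incidence correspondence under a proper projection, reusing verbatim the machinery of the previous proposition.

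First I would note that every $\FF_{X}$-invariant subscheme $Y$ is automatically tangent to $\DD$: since $\FF_{X}$ is tangent to $\DD$, so are its leaves, and hence so is any invariant subscheme. Thus every $\FF_{X}$-invariant subscheme with Hilbert polynomial $\chi$ lies in the set $H(\DD)\subseteq H=Hilb_{\chi}(M)$, which we have just shown to be closed. Consequently $\ZZ_{\chi}$ is exactly the image under the first projection of the incidence set
$$
I=\{(X,Y)\in \EE(a,b)\times H(\DD): Y \hspace{0.1cm} \text{is} \hspace{0.1cm} \FF_{X}\text{-invariant}\}.
$$
Since $H(\DD)$ is closed in the projective scheme $H$, it is proper, so the projection $\EE(a,b)\times H(\DD)\rightarrow \EE(a,b)$ is a closed map. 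It therefore suffices to prove that $I$ is closed.

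Next I would encode invariance as the vanishing of a single sheaf homomorphism, exactly as in the proof that $H(\DD)$ is closed. Recall that $Y$ is $\FF_{X}$-invariant precisely when the contraction of the generating field $X_{\FF}\in H^{0}(M,\DD\otimes \OO_{M}(a,b))$ against the conormal directions of $Y$ vanishes; that is, when the composite $\KK_{Y}\hookrightarrow \Omega^{1}_{M}|_{Y}\xrightarrow{\langle\,\cdot\,,X_{\FF}\rangle}\OO_{M}(a,b)|_{Y}$ is zero, where $\KK_{Y}$ is the conormal sheaf of the sequence $0\rightarrow \KK_{Y}\rightarrow \Omega^{1}_{M}|_{Y}\rightarrow \Omega^{1}_{Y}\rightarrow 0$. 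Over $\EE(a,b)\times H(\DD)$ the tautological section produces a universal field $\mathbb{X}$, a section of $(\DD\otimes \OO_{M}(a,b))$ twisted by $\OO_{\EE(a,b)}(1)$, and contracting $\mathbb{X}$ against the relative conormal sheaf $\KK$ of the universal family $\UU$ yields a homomorphism
$$
\theta:\KK\longrightarrow (q_{1}^{*}\OO_{M}(a,b))\otimes \LL
$$
for a suitable line bundle $\LL$, whose restriction $\theta_{(X,Y)}$ vanishes exactly when $Y$ is $\FF_{X}$-invariant. Hence $I=\{(X,Y):\theta_{(X,Y)}=0\}$.

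Finally I would twist by a sufficiently ample sheaf $\MM^{\otimes r}$ and pick a surjection $\OO_{\UU}^{\oplus N}\twoheadrightarrow \KK\otimes \MM^{\otimes r}$ as before, reducing $\theta$ to a map $\sigma$ out of a free sheaf whose target $\FF=(q_{1}^{*}\OO_{M}(a,b))\otimes \LL\otimes \MM^{\otimes r}$ satisfies $R^{1}p_{*}\FF=0$ by Serre vanishing and theorem 3.8.8 of \cite{Hrs}. The closedness lemma of \cite{CouPer} then shows that $I=\{\sigma=0\}$ is closed in $\EE(a,b)\times H(\DD)$, and the proper projection yields that $\ZZ_{\chi}$ is closed. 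I expect the main obstacle to be the uniform construction of $\theta$ over the two-parameter base and the verification that its vanishing encodes invariance fiberwise; once this is set up, the flatness and $R^{1}$-vanishing hypotheses are obtained after the ample twist exactly as in the previous proposition, and properness of the Hilbert scheme does the rest.
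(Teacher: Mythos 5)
Your argument is correct and follows essentially the same route as the paper: both realize $\ZZ_{\chi}$ as the image, under the projection to $\EE(a,b)$ (closed because $H$ is projective), of a closed incidence set sitting inside $\EE(a,b)\times H(\DD)$. The only difference is that where you re-run the universal-family/conormal-sheaf/ample-twist machinery to prove the incidence set closed, the paper simply cites Proposition 2.1 of \cite{CouPer} for the closedness of $Z=\{(X,Y)\in\Sigma\times H: Y \text{ is } \FF_{X}\text{-invariant}\}$ with $\Sigma=\Pp H^{0}(M,TM\otimes\OO_{M}(a,b))$ and then intersects with $\EE(a,b)\times H(\DD)$ --- your construction is in effect a proof of that cited proposition.
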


\begin{proof}
Let us denote $\Sigma= \Pp H^{0}(M, TM \otimes \OO_{M}(a,b))$. By the proposition (2.1) of \cite{CouPer} we have that the set
$$
Z=\{(X,Y)\in \Sigma \times H: Y \hspace{0.1cm} is \hspace{0.1cm} \FF_{X}-invariant\}
$$
is a closed set of $\Sigma \times H$. Observe that we have an inclusion $\EE(a,b) \subseteq \Sigma$, so 
$$
Z \cap (\EE(a,b)\times H(\DD)) \subseteq \EE(a,b)\times H
$$
is a closed subset. Now it is enough to note that $\ZZ_{\chi}$ is the natural projection of this closed set.
\end{proof}
We conclude now the proof of the theorem \ref{jouanolou-para-ecuaciones-diferenciales}. Denoting by $\chi_{0}$ to the Hilbert polynomial of a fiber (of $\pi$) $F$, and taking a $(b+2)$-web $\WW$ of degree $a-1$, with $a \geq 3$ and $b \geq 1$, we have that $F.X_{\VV}$ is an element of $\EE(a,b)$, where $F$ is the section corresponding to the surface $S_{\WW}$. By theorem \ref{jouanolou-para-webs} we can choose $\WW$ with no algebraic invariant curves and then $F.X_{\VV}\notin \ZZ_{\chi}$ for every $\chi \neq \chi_{0}$. Since there are only countable many Hilbert polynomials, we conclude the first part of the theorem.\\

When $a,b \geq 3$, we take a $(b-1)$-web $\WW$ of degree $a+2$ and so $F. X_{\LL}$ has bidegee $(a,b)$ (here again $F$ is the section corresponding to $S_{\WW}$). We choose $\WW$ without singular points and with no algebraic invariant curves. Since the curves which are lifting of curves on $\PP$ have different Hilbert polynomial from $\chi_{0}$, $F.X_{\LL}$ is not in $\ZZ_{\chi_{0}}$. This finishes the proof. 
\section{Second order differential equations on complex surfaces}
Let $S$ be any complex compact surface and $M=\Pp(TS)$ the contact variety. We recall that $H^{*}(M)$ is generated as a $H^{*}(S)$-algebra by the chern class of $\OO_{M}(1)$. We denote by $\EE(\NN, k)=\Pp H^{0}(M,\DD \otimes \pi^{*}(\NN)\otimes \OO_{M}(1)^{\otimes k})$ the space of second order differential equations on $S$ with cotangent bundle $\pi^{*}(\NN)\otimes \OO_{M}(1)^{\otimes k}$.\\

\begin{remark}
For any $\WW \in \Ww(k, \NN)$ wich is given locally by $\omega=a_{0}(x,y)+a_{1}(x,y)\frac{dy}{dx}+\ldots+a_{k}(x,y)\frac{dy}{dx}^{k}
$, the associated surface $S_{\WW}$ is given by the zero set of $F(x,y,p)=a_{0}(x,y)+a_{1}(x,y)p+\ldots+a_{k}(x,y)p^{k}$, where $(x,y)$ are local coordinates on $S$. Doing the change of coordinates we see that $[S_{\WW}]=\pi^{*}(\NN)\otimes \OO_{M}(1)^{\otimes k}$.
\end{remark}

Folowing the ideas of the main theorem we can get a same result for any surface:

\begin{theorem}
Let $\NN$ be an ample line bundle on $S$ and $r \gg 0$. Then for any $k \geq 1$ the second order differential equation defined by a very generic element of $\EE(\NN^{\otimes r},k-2)$ has no algebraic solutions. 
\end{theorem}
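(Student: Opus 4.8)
The plan is to mimic the proof of Theorem \ref{jouanolou-para-ecuaciones-diferenciales}, replacing Theorem \ref{jouanolou-para-webs} by its analogue for webs with ample normal bundle (the web theorem of Section \ref{seccion-sobre-jouanolou-para-webs}) and keeping \emph{only} the vertical construction: on a general surface $M=\Pp(TS)$ carries a single projection $\pi$ and no dual structure, so there is no counterpart of $\LL$ to worry about. First I would record that an algebraic solution of $\FF\in\EE(\NN^{\otimes r},k-2)$ is the same as a $\DD$-tangent invariant algebraic curve of the form $\widetilde{C}$, the lift of a curve $C\subseteq S$. Indeed, besides the fibers of $\pi$ the only curves on $M$ tangent to $\DD$ are such lifts (first lemma of Section 2), and a fiber is contracted by $\pi$, hence never projects to a solution. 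Fixing a polarization $\OO_{M}(1)\otimes\pi^{*}A$ with $A$ ample on $S$, a lift $\widetilde{C}$ meets $\pi^{*}A$ positively while a fiber does not, so every lift of a curve has Hilbert polynomial different from the Hilbert polynomial $\chi_{0}$ of a fiber.

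Next I would reproduce the stratification of Section 5. For each $\chi\in\QQ[t]$ of degree one the set $H(\DD)\subseteq Hilb_{\chi}(M)$ of subschemes tangent to $\DD$ is closed; the argument given there uses only that $\DD$ is a nonsingular codimension-one distribution on $M$, together with Lemma \ref{subesquema-tangen-a-distribucion} and the lemmas of \cite{CouPer}, all valid verbatim for $M=\Pp(TS)$. Consequently
\[
\ZZ_{\chi}=\{\FF\in\EE(\NN^{\otimes r},k-2):\FF\text{ has a }\DD\text{-tangent invariant subscheme with Hilbert polynomial }\chi\}
\]
is closed, as the projection of the intersection of the closed incidence set of \cite{CouPer} with $\EE(\NN^{\otimes r},k-2)\times H(\DD)$. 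The locus of equations admitting an algebraic solution is then contained in $\bigcup_{\chi\neq\chi_{0}}\ZZ_{\chi}$, a countable union of closed sets, so it suffices to prove that each $\ZZ_{\chi}$ with $\chi\neq\chi_{0}$ is a proper subset.

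To produce, for every such $\chi$, one equation outside $\ZZ_{\chi}$ I would use $\VV$. The relative Euler sequence gives $T^{*}\VV=\OO_{M}(1)^{\otimes(-2)}\otimes\pi^{*}K_{S}$ (which specializes to $\OO_{M}(1,-2)$ on $\PP$), and by the remark of Section 6 a $k$-web $\WW$ of normal bundle $\NN'$ has $[S_{\WW}]=\pi^{*}\NN'\otimes\OO_{M}(1)^{\otimes k}$. Hence, if $F$ cuts out $S_{\WW}$, the product $F\cdot X_{\VV}$ lies in $H^{0}(M,\DD\otimes\OO_{M}(1)^{\otimes(k-2)}\otimes\pi^{*}(K_{S}\otimes\NN'))$, which equals $H^{0}(M,\DD\otimes\pi^{*}\NN^{\otimes r}\otimes\OO_{M}(1)^{\otimes(k-2)})$ exactly when $\NN'=\NN^{\otimes r}\otimes K_{S}^{-1}$. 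For $r\gg 0$ I would then, following the proof of the web theorem, split $\NN^{\otimes r}\otimes K_{S}^{-1}=\LL_{1}\otimes\cdots\otimes\LL_{k}$ with each $\LL_{i}$ sufficiently ample (absorbing the fixed twist $K_{S}^{-1}$ into one factor), choose by \cite{CouPer} a foliation $\omega_{i}\in\Pp H^{0}(S,\Omega^{1}_{S}\otimes\LL_{i})$ with no algebraic invariant curve, and set $\WW=\omega_{1}\boxtimes\cdots\boxtimes\omega_{k}$; this web has normal bundle $\NN^{\otimes r}\otimes K_{S}^{-1}$ and no algebraic invariant curve.

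Finally I would verify $F\cdot X_{\VV}\notin\ZZ_{\chi}$ for every $\chi\neq\chi_{0}$. A $\DD$-tangent invariant subscheme $Y$ with $\chi\neq\chi_{0}$ is not a fiber, hence is a lift $\widetilde{C}$. Away from $S_{\WW}=\{F=0\}$ the field $F\cdot X_{\VV}$ generates $\VV$, whose leaves are the fibers of $\pi$; since $\widetilde{C}$ is transverse to these fibers it can be invariant only if $\widetilde{C}\subseteq S_{\WW}$. But a curve in $S_{\WW}$ tangent to $\DD$ is tangent to $\FF_{\WW}=\DD|_{S_{\WW}}$, so it projects to a $\WW$-invariant curve on $S$, contradicting the choice of $\WW$. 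Thus each $\ZZ_{\chi}$ ($\chi\neq\chi_{0}$) is proper and closed, and a very generic element of $\EE(\NN^{\otimes r},k-2)$ avoids all of them, so it has no algebraic solution. The main obstacle is the line-bundle bookkeeping of the third step: one must confirm the twist $T^{*}\VV=\OO_{M}(1)^{\otimes(-2)}\otimes\pi^{*}K_{S}$ and arrange the decomposition of $\NN^{\otimes r}\otimes K_{S}^{-1}$ into factors ample enough for \cite{CouPer} to apply, which is precisely what forces the hypotheses that $\NN$ be ample and $r\gg 0$.
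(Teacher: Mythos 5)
Your proposal is correct and takes essentially the same approach as the paper: the paper likewise reduces to the closed sets $\ZZ_{\chi}$ and exhibits the element $F_{\WW}\cdot X_{\VV}$ with $\WW\in\Ww(k,\NN^{\otimes r}\otimes K_{S}^{*})$ having no invariant algebraic curves (itself built, as you do, from a product of foliations without invariant curves via \cite{CouPer}), using $T^{*}\VV=\pi^{*}(K_{S})\otimes\OO_{M}(-1)^{\otimes 2}$. You merely make explicit two points the paper dismisses as ``clearly'': the verification that $F_{\WW}\cdot X_{\VV}\notin\ZZ_{\chi}$ for $\chi\neq\chi_{0}$, and the separation of lifts from fibers by Hilbert polynomial (where your appeal to $\pi^{*}A\cdot\widetilde{C}>0$ should really be phrased as a choice of polarization, or as excluding the connected component of fibers in the Hilbert scheme, since the Hilbert polynomial is computed against the full polarization).
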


\begin{proof}
 Using the notation of the previous section one can define the closed set $\ZZ_{\chi}=\{X \in \EE(\NN^{\otimes r},k-2): \FF_{X} \hspace{0.1cm} has \hspace{0.1cm} an \hspace{0.1cm} invariant \hspace{0.1cm} subscheme \hspace{0.1cm} of \hspace{0.1cm} Hilbert \hspace{0.1cm} polynomial \hspace{0.1cm}\chi\}$. To conclude the theorem we take $\WW \in \Ww(k, \NN^{\otimes r} \otimes K_{S}^{*})$ with no algebraic invariant curves ($K_{S}$ is the canonical bundle of $S$) and observe that $F_{\WW}.\VV \in \EE(\NN^{\otimes r},k-2)$, because $T^{*}\VV= \pi ^{*}(K_{S})\otimes \OO_{M}(-1)^{\otimes 2}$ (see the Euler sequence in diagram (\ref{diagrama-del-fibrado-normal-de-la-distribucion-de-contacto})). Clearly  $F_{\WW}.\VV$ is not an element of $\ZZ_{\chi}$.

\end{proof}


\begin{thebibliography}{11}

\bibitem{Brunella} M. Brunella, \emph{Birational Geometry of Foliations}. First Latin American Congress of Mathematicians, IMPA, 2000.

\bibitem{CouPer} S.C. Coutinho and J.V. Pereira, \emph{On the density of algebraic foliations without algebraic invariant sets}. Journal für die Reine und Angewandte Mathematik, 594, pp. 117-136, 2006.

\bibitem{Dar} G. Darboux, \emph{Mémoire sur les équations différentielles algébriques du $I^{o}$ ordre et du premier degré}. Bull. des Sc. Math., (Mélanges), 60-96, 123-144, 151-200, 1878.

\bibitem{GH} P. Griffiths and J. Harris. \emph{Principles of Algebraic Geometry}. Wiley Classics Library Edition Published 1994.

\bibitem{Hrs} R. Hartshorne. \emph{Algebraic geometry}. New York : Springer-Verlag, c1977.

\bibitem{Jou} J. P. Jouanolou, \emph{Equations de Pfaff algébriques}. Lect. Notes Math. 708, Springer, 1979. 

\bibitem{Alc} A. Lins Neto, \emph{Algebraic solutions of polynomial differential equations and foliations in dimension two}. Lect. Notes Math. 1345 (1988), 192-232. 

\bibitem{Per} J. V. Pereira, \emph{Sobre a densidade de Folheações sem soluções Algébricas}. Revista del Seminario Iberoamericano de Matemáticas Singularidades en Tordesillas 3, no. 4, 51-57, 2007.

\bibitem{PiPer} J.V. Pereira and L. Pirio, \emph{An invitation to web geometry}. Publicações Matemáticas, IMPA, 2009.

\bibitem{Soa} M. G. Soares, \emph{On algebraic sets invariant by one-dimensional foliations on $\CC P (3)$}. Ann. Inst. Fourier 43 (1993), 143-162.

\bibitem{Jos} J. N.A. Yartey. \emph{Number of singularities of a generic web on the complex projective plane}. J. Dyn. Control Syst.  11  (2005),  no. 2, 281--296.  


\end{thebibliography}
\end{document}